\def\Type{\hbox{\sf Type}}
\newcommand{\ints}{\mathbb{Z}}
\newcommand{\II}{\mathbb{I}}
\newcommand{\JJ}{\mathsf{J}}
\newcommand{\Unit}{\mathbf{1}}
\newcommand{\Nat}{\mathsf{Nat}}
\newcommand{\north}{\mathsf{north}}
\newcommand{\south}{\mathsf{south}}
\newcommand{\merid}{\mathsf{merid}}
\def\norm#1{\left\|#1\right\|}
\newcommand{\UU}{\mathcal{U}}
\newcommand{\Elem}{{\sf Elem}}
\newcommand{\Cov}{\mathsf{C}}
\newcommand{\CC}{{\mathcal C}}
\newcommand{\GG}{\mathsf{isMod}}
\newcommand{\patch}{\mathsf{patch}}
\newcommand{\leftinv}{\mathsf{linv}}
\newcommand{\msup}{\mathsf{sup}}
\newcommand{\id}{\mathsf{id}}
\newcommand{\const}{\mathsf{const}}
\newcommand{\hFiber}{\mathsf{fib}}
\newcommand{\isEquiv}{\mathsf{isEquiv}}
\newcommand{\BB}{\mathcal{B}}
\newcommand{\unit}{()} 
\newcommand{\zero}{\mathsf{zero}}
\newcommand{\SUCC}{\mathsf{succ}}
\newcommand{\rec}{\mathsf{rec}}
\newcommand{\unitF}{\langle \rangle}
\newcommand{\pairF}[2]{\langle #1,#2 \rangle}
\newcommand{\mpF}[1]{\langle #1 \rangle} 
\newcommand{\D}{D}              
\newcommand{\DD}{\tilde{D}} 
\newcommand{\dd}{\tilde{D}} 
\newcommand{\E}{E}
\newcommand{\EE}{\tilde{E}}
\newcommand{\PP}{\mathsf{P}}
\newcommand{\QQ}{\mathsf{Q}}
\newcommand{\RR}{\mathsf{R}}
\newcommand{\op}{\mathsf{op}}
\newcommand{\functorial}{\mathsf{ftl}}
\newcommand{\levelwise}{\mathsf{lw}}
\newcommand{\cofuniv}{\Phi}
\newcommand{\cofunivLW}{\Phi_{\levelwise}}
\newcommand{\cofunivConst}{\Phi}
\newcommand{\Gpd}{\mathsf{Gpd}}
\newcommand{\Modality}{\mathsf{Modality}}
\newcommand{\Set}{\mathsf{Set}}
\newcommand{\Psh}{\operatorname{PSh}}
\newcommand{\Cof}{\mathsf{Cof}}
\newcommand{\TrivCof}{\mathsf{TrivCof}}
\newcommand{\Fib}{\mathsf{Fib}}
\newcommand{\TrivFib}{\mathsf{TrivFib}}
\newcommand{\GenCof}{\mathsf{I}}
\newcommand{\GenTrivCof}{\mathsf{J}}
\newcommand{\WEquiv}{\mathsf{W}}
\newcommand{\InjFib}{\mathsf{InjFib}}
\newcommand{\InjTrivFib}{\mathsf{InjTrivFib}}
\newcommand{\Alg}{\mathsf{Alg}}
\newcommand{\Coalg}{\mathsf{Coalg}}
\newcommand{\Map}{\mathsf{Map}}
\newcommand{\CofFun}{\mathsf{C}}
\newcommand{\TrivCofFun}{\mathsf{C_t}}
\newcommand{\FibFun}{\mathsf{F}}
\newcommand{\TrivFibFun}{\mathsf{F_t}}
\newcommand{\obj}{\operatorname{obj}}
\newcommand{\aug}{\mathsf{aug}}
\newcommand{\Endo}{\mathsf{Endo}}
\newcommand{\gen}{\mathsf{gen}}
\newcommand{\M}{\mathcal{M}}
\newcommand{\join}{\mathop{\star}}
\newcommand{\Prop}{\mathsf{Prop}}
\newcommand{\mycomment}[1]{}
\newcommand{\varname}[1]{\begingroup\newmcodes@\mathit{#1}\endgroup} 
\newcommand{\DeclareAbbrevation}[2]{\newcommand{#1}{\@ifnextchar{.}{#2}{#2.\@\xspace}}}
\newlist{conditions}{enumerate}{2}
\setlist[conditions,1]{label=\arabic*.,ref=\arabic*}
\setlist[conditions,2]{label=\arabic*.,ref=\arabic{conditionsi}.\arabic*}
\newlist{parts}{enumerate}{1}
\setlist[parts]{label=(\roman*)}
\newlist{options}{enumerate}{1}
\setlist[options]{label=\arabic*.,ref=\arabic*}
\newcounter{diagram}
\NewDocumentEnvironment{diagram}{oo}{
  \refstepcounter{diagram}
  \begin{center}
    \IfNoValueTF{#1}{
      \begin{tikzcd}
    }
    {
      \begin{tikzcd}[#1]
    }
}{
    \end{tikzcd}
    \IfNoValueF{#2}{
      \par\medskip
      Diagram~\thediagram: \textit{#2}
    }
  \end{center}
}
\NewDocumentEnvironment{diagram*}{o}{
  \begin{diagram}[#1]
}{
  \end{diagram}
}
\newcommand{\pullback}[1]{\save*!/#1-1.2pc/#1:(-1,1)@^{|-}\restore}
\DeclarePairedDelimiter\parens\lparen\rparen
\DeclarePairedDelimiter\verts\lvert\rvert
\DeclarePairedDelimiter\braces\lbrace\rbrace
\DeclarePairedDelimiterX\set[2]\lbrace\rbrace{#1 \mathrel{\delimsize\vert} #2}
\DeclareAbbrevation{\ie}{i.e}
\DeclareAbbrevation{\eg}{e.g}
\DeclareAbbrevation{\Eg}{E.g}
\DeclareAbbrevation{\cf}{cf}
\DeclareAbbrevation{\etc}{etc}
\DeclareAbbrevation{\resp}{resp}
\DeclareAbbrevation{\etal}{et al}
\DeclareAbbrevation{\ibid}{ibid}
\DeclareAbbrevation{\ca}{ca}
\DeclareAbbrevation{\vs}{vs}
\crefname{conditionsi}{condition}{condition}
\Crefname{conditionsi}{Condition}{Condition}
\crefname{conditionsii}{condition}{condition}
\Crefname{conditionsii}{Condition}{Condition}
\crefname{diagram}{diagram}{diagrams}
\Crefname{diagram}{Diagram}{Diagrams}
\crefname{figure}{figure}{figures}
\Crefname{figure}{Figure}{Figures}
\crefname{formula}{formula}{formulas}
\Crefname{formula}{Formula}{Formulas}
\crefname{partsi}{part}{parts}
\Crefname{partsi}{Part}{Parts}
\crefname{optionsi}{option}{options}
\Crefname{optionsi}{Option}{Options}
\crefname{section}{Section}{Sections}
\Crefname{section}{Section}{Sections}
\crefname{square}{Square}{Squares}
\Crefname{square}{Square}{Squares}
\crefname{subsection}{Subsection}{Subsections}
\Crefname{subsection}{Subsection}{Subsections}
\crefname{subsubsection}{Subsection}{Subsections}
\Crefname{subsubsection}{Subsection}{Subsections}
\newtheorem{theorem}{Theorem}[section]
\newtheorem{corollary}[theorem]{Corollary}
\newtheorem{lemma}[theorem]{Lemma}
\newtheorem{proposition}[theorem]{Proposition}
\theoremstyle{definition}
\newtheorem{definition}[theorem]{Definition}
\theoremstyle{remark} 
\newtheorem{remark}[theorem]{Remark}
\begin{document}

\title{Constructive Sheaf Models of Type Theory}

\author{Thierry Coquand, Fabian Ruch, and Christian Sattler}
\date{Computer Science Department, University of Gothenburg}
\maketitle


\section*{Introduction}

Despite being relatively recent, the notion of (pre\discretionary{-)}{}{)}sheaf model
has a rich and intricate history which mixes different intuitions
coming from topology, logic and algebra. Eilenberg and Zilber~\cite{EilenbergZ50}
used a presheaf model (simplicial sets) to represent
geometrical objects, and the intuition is {\em geometrical}:
we think of the objects $I,J,\dots$ of the base category as basic ``shapes'';
a presheaf $A$ is then given by a family of sets $A(I)$ of objects of each shape $I$,
which are related by the restriction maps $A(I) \rightarrow A(J)$.
A little later, but independently, Beth~\cite{Beth56} and Kripke~\cite{Kripke65}
used a sheaf and a presheaf model over trees, respectively,
to provide a formal semantics for intuitionistic logic. Their motivations
were logical, and the intuition is of a {\em temporal} nature instead: we think of the objects of the node of the tree
as ``stages of knowledge'' and of the ordering as ``increase in knowledge''.
Scott~\cite{Scott80} described a presheaf model of higher\hyp{}order logic and
pointed out the potential interest for the semantics of \textlambda\hyp{}calculus.
This was refined by Martin Hofmann~\cite{Hofmann97} who provided a presheaf
model of dependent type theory with universes. Hofmann's presheaf model was subsequently used in an
essential way in works on constructive semantics of type theory with
{\em univalent} universes~\cite{CCHM15,CHM18,OrtonP16}.

The generalization of such presheaf models of dependent type theory, and especially
of universes, to a {\em sheaf} model semantics
is however non\hyp{}trivial. The problem in generalising this semantics for universes
comes essentially from the fact that the collection of sheaves does not form a sheaf
in any natural way: if we are given locally sheaves that are compatible, one can patch
them together but not in a {\em unique} way, only unique {\em up to isomorphism}.
This problem was the motivation for the introduction of stacks and a more
subtle notion of patching of sheaves (\cf~\cite[Section~3.3]{EGAI}), and in general
patching of mathematical structures. The generalization of this to patching of
higher structures was the content of the first part of Joyal's letter to Grothendieck~\cite{Joyal84}.
One contribution of the present paper is to provide
a constructive version of this notion\footnote{Joyal's argument was using non\hyp{}constructive reasoning in simplicial sets
  and then Barr's theorem (see~\cite{Barr74}).
  The present paper can be developed directly in the constructive framework of CZF with universes
introduced by Aczel~\cite{Aczel98}.}
by describing a
sheaf model semantics of type theory with univalence~\cite{Voevodsky15,HoTT}.
This uses in a crucial way the fact that
we have a {\em constructive} interpretation of univalence as in~\cite{CCHM15,OrtonP16}, which can
be relativized to any presheaf model. The main point is then that the operation sending an object to its object of descent data (a compatible collection of elements of its restrictions) defines
a {\em left exact modality} (see~\cite{HoTT,Quirin16,RijkeSS17}), which can then be used
to build internally models of univalent type theory~\cite{Quirin16}.

This work opens the possibility of generalising works of sheaf models of intuitionistic
logic as in~\cite{TroelstraD88b} to sheaf models of univalent type theory. It extends the previous
work in~\cite{CoquandMR17} to a complete model of univalence, and has no restrictions for
representing (higher) data types.
We give only one application (independence of countable choice), but we expect for instance that results
such as in~\cite{MannaaC13} can be generalized as well, and that we can give a constructive
account of works such as in~\cite{Shulman18,Wellen17}.
The present semantics (in a preliminary version)
has already been used by Weaver and Licata~\cite{WeaverL20} for building a constructive model of directed univalence.

This paper is organized as follow. We first introduce the notion of
lex operation as an operation acting on types and families of types.
A descent data operation is then
a lex operation which defines a left exact modality \cite{HoTT,Quirin16,RijkeSS17}.
These two notions are formulated purely syntactically in the framework of type theory.
We show next how to instantiate these operations for cubical presheaves. In this setting,
we can understand the
notion of being modal for a descent data operation as a generalization of the sheaf condition,
where the compatibility requirements are expressed up to path equality instead of
being expressed as strict equalities. We then provide some examples and the application to
the unprovability of countable choice. In an appendix we explain how some
of our results about descent data operations can be generalized to accessible
left exact modalities.


\section{Abstract notion of descent data}

In this article, we take terminology in type theory with potentially both strict and homotopical meaning to have the strict meaning by default.
For example, equality (denoted by the symbol ${=}$) refers to the strict equality (as opposed to identity or path types), isomorphisms refer to strictly invertible maps, and pullbacks refer to strict pullbacks.

We use the following notations.
We write $\Unit$ for the unit type and $\unit : \Unit$ for its unique element.
Given a type $A$ and a family $B$ of types over $A$, we write $\sum_A B$ for their sum type and $\prod_A B$ for their product type.
The pairing operation is denoted by $(a, b) : \sum_A B$ for $a : A$ and $b : B\,a$.
The projection maps are denoted by $\pi_1$ and $\pi_2$.
We write $\id_A$ for the identity function on $A$ and $g\circ f$ for the composition of $f \colon A \to B$ and $g \colon B \to C$.
If $B$ is a family of types over $A$ and $f\colon A'\rightarrow A$, we also write $B\circ f$ for the family of types over $A'$ obtained from $B$ by reindexing along $f$.

\subsection{Lex operation} \label{subsec:lex-operation}

The concept of lex operations is defined for a dependent type theory with only unit type, dependent sums, dependent product and universes.
In particular, path types are not needed.
Intuitively, a lex operation is an endofunctor on the category of types and functions (compatible with substitution) which preserves the unit type and dependent projections of sum types up to isomorphism.

A {\em lex operation}\footnote{The notion of lex operation appears implicitly in a natural way
when describing the rules of inductive data types~\cite{CoquandP88}. If we have a family
$\D_a$ of lex operations indexed over $a:A$, we can consider the inductive type $T$
with constructor $\msup:\prod_{a:A} (\D_aT\rightarrow T)$ and elimination rule
$\rec\,f:\prod_TP$ for $f:\prod_{a:A} \prod_{u:\D_aT} (\DD_aP\,u\rightarrow P(\msup_a\,u))$.
We can then write the computation rule
\begin{equation*}
\rec\,f\,(\msup_a\,u) = f_a\,u\,(\dd_a(\rec\,f)\,u)
\end{equation*}
For justifying the use of such inductive definitions, we need some ``accessibility'' assumption
on the functor $\D$, which will be satisfied in the examples.
In the special case where $\D_aX$ is $X^{Ba}$ for $B$ a family of over $A$ we recover
the $W$\hyp{}type $W_A B$.}
is given by an operation $\D$ on types and functions forming a functor:
we have $\D f:\D A\rightarrow \D B$ if $f:A\rightarrow B$ with $\D(g\circ f) = \D g\circ \D f$
and $\D(\id_A) = \id_{\D A}$.

The operation $\D$ should also preserve the unit type $\Unit$ up to isomorphism.
Specifically we have an element $\unitF$ in $\D\Unit$ and $x = \unitF$ if $x$ is in $\D\Unit$.

Furthermore, $\D$ should preserve dependent projections of sum types up to isomorphism.
We assert this by an operation on families of types: $\DD B$ is family of types over $\D A$
if $B$ is a family of types over $A$.
This should be natural in $A$
together with operations ensuring that $\D(\sum_AB)$ is naturally
isomorphic to $\sum_{\D A}{\DD B}$ over $\D A$.
Naturality in $A$ means $\DD (B\circ f) = \DD B\circ \D f:\D{A'}\rightarrow\UU$ for $f:A'\rightarrow A$.
The natural isomorphism between $\D(\sum_AB)$ and $\sum_{\D A}{\DD B}$ over $\D A$ is given by
an operation $\dd s : \prod_{\D A}{\DD B}$ on sections $s:\prod_A B$
satisfying $\dd (s\circ f) = \dd s \circ \D f : \prod_{\D{A'}} \DD (B\circ f)$
and a pairing operation $\pairF{u}{v} : \D(\sum_A B)$ for elements $u:\D A$ and $v:(\DD B)\,u$
satisfying
\begin{align*}
(\D\pi_1)\pairF{u}{v} &= u &
(\dd\pi_2)\pairF{u}{v} &= v &
\pairF {(\D\pi_1)\,w}{(\dd\pi_2)\,w} &= w
\end{align*}
where $w:\D(\sum_AB)$.

We also assume that universes reflect these operations.
This means we have $\D A:\UU$ if $A:\UU$ and $\DD B:\D A\rightarrow\UU$ if $B:A\rightarrow \UU$ and $A$ a type (crucially, $A$ need not be in $\UU$ here).

\medskip

The canonical example of a lex operation is exponentiation with a fixed type $R$
(assumed to be in all universes).
We define $\D A = A^R$, $(\DD B)u = \prod_{x:R}\,B(u\,x)$, and $(\dd s)u = \lambda_{x:R}\,s(u\,x)$.
The pairing is given by $\pairF{u}{v} = \lambda_{x:R}\,(u\,x,v\,x)$.

\begin{remark}
  Let $\UU$ be a universe.
  The action of the operation $\DD$ on $\UU$\hyp{}small families is uniquely determined by the universal case $L = \DD\,\id_\UU : \D \UU \to \UU$: we have (and can define) $\DD B = L\circ \D B$ with $\D B:\D A \rightarrow \D\UU$ for $B:A\rightarrow\UU$.
  This corresponds to the ``escaping'' function in Section 2.5 of \cite{SchreiberS14}.
  We can thus describe the action of $\DD$ on $\UU$\hyp{}small families and associated operations by requiring that $\D$ applied to the ``universal $\UU$\hyp{}small fibration'' $\sum_{X:\UU} X \to \UU$ is isomorphic to a ``$\UU$\hyp{}small fibration'' (a projection of a type in $\UU$), and that $\D$ preserves pullbacks of this map.
\end{remark}

\begin{proposition} \label{lex-operation-pointed}
  Any lex operation $\D$ is uniquely pointed.%
  \footnote{We owe this observation to Dan Licata.}
\end{proposition}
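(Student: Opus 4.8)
The plan is to show that the pointing is forced by the unit-preservation part of the lex structure alone, so that neither $\DD$ nor the sum isomorphism is needed. Recall that a pointing of $\D$ is a natural transformation $\eta$ from the identity functor to $\D$, that is, a family of maps $\eta_A : A \to \D A$ satisfying $\D f \circ \eta_A = \eta_B \circ f$ for every $f : A \to B$. For $a : A$ write $c_a : \Unit \to A$ for the constant map determined by $c_a(\unit) = a$. I would then \emph{define} the candidate pointing by transporting the distinguished element $\unitF : \D\Unit$ along $\D c_a$:
\[
  \eta_A(a) = (\D c_a)\,\unitF.
\]
This is well typed using only functoriality of $\D$ and the existence of $\unitF$.

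Naturality follows immediately from functoriality together with the definitional identity $f \circ c_a = c_{f(a)}$. Indeed, for $f : A \to B$ we compute
\[
  (\D f)(\eta_A(a)) = (\D f)\bigl((\D c_a)\,\unitF\bigr) = \D(f \circ c_a)\,\unitF = (\D c_{f(a)})\,\unitF = \eta_B(f(a)),
\]
using $\D f \circ \D c_a = \D(f \circ c_a)$ in the middle step. Hence $\eta$ is a pointing.

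For uniqueness, suppose $\eta'$ is any pointing. Applying naturality of $\eta'$ to the map $c_a : \Unit \to A$ at the element $\unit : \Unit$ gives
\[
  \eta'_A(a) = \eta'_A(c_a(\unit)) = (\D c_a)(\eta'_\Unit(\unit)).
\]
Now $\eta'_\Unit(\unit)$ is an element of $\D\Unit$, so the second clause of unit-preservation (every element of $\D\Unit$ equals $\unitF$) forces $\eta'_\Unit(\unit) = \unitF$. Therefore $\eta'_A(a) = (\D c_a)\,\unitF = \eta_A(a)$, so $\eta'$ coincides with the $\eta$ constructed above, and $\eta$ is the unique pointing.

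The main thing to get right, rather than a genuine obstacle, is bookkeeping about strictness: the equalities $f \circ c_a = c_{f(a)}$ and "$x = \unitF$ for every $x : \D\Unit$" are used as strict (definitional) equalities, in line with the convention that unmarked equality is strict. This is what makes the naturality square hold on the nose and makes uniqueness an honest equality of maps rather than merely a pointwise identity up to a path. It is worth emphasising that the construction exploits only the preservation of $\Unit$; the action of $\D$ on sum types is irrelevant to the existence and uniqueness of the point.
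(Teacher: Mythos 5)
Your proof is correct and follows essentially the same route as the paper: define $\eta_A\,a = (\D\epsilon_a)\,\unitF$ for the constant map $\epsilon_a : \Unit \to A$, check naturality via functoriality and $f \circ \epsilon_a = \epsilon_{f\,a}$, and derive uniqueness from naturality at $\epsilon_a$ together with the fact that $\D\Unit$ has $\unitF$ as its only element. Your closing remark that only unit-preservation is used (not $\DD$ or the sum isomorphism) is a accurate observation consistent with the paper's argument.
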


\begin{proof}
  We define $\eta_A\, a = (\D\epsilon_a)\,\unitF$ with $\epsilon_a = \lambda_{x:\Unit}\,a$.
  We then have for $f:A\rightarrow B$
  \begin{equation*}
  (\D f)(\eta_A\, a) = (\D f\circ \D\epsilon_a)\,\unitF = (\D\epsilon_{f\,a})\,\unitF = \eta_B(f\,a)
  \end{equation*}
  Note furthermore that this natural transformation $\eta_A$ is uniquely determined,
  since we should have $\eta_{\Unit}\,\unit = \unitF$ and so
  $\eta_A\, a = \eta_A(\epsilon_a\,\unit) = (\D\epsilon_a)(\eta_{\Unit}\,\unit) = (\D\epsilon_a)\,\unitF$.
\end{proof}

\begin{remark}\label{iso}
  For $T : \UU$, the map
  $(\DD \epsilon_T)\,\unitF\rightarrow \sum_{\D\Unit}\DD \epsilon_T\rightarrow
  \D(\sum_{\Unit}\epsilon_T)\rightarrow \D T$ is an isomorphism, as a composition
  of isomorphisms.
  As a consequence, the map
  \begin{equation*}
  (\DD B)(\eta_A\, a)\longrightarrow \D(B\,a)\qquad v\longmapsto (\D\pi_2)\pairF{\unitF}{v}
  \end{equation*}
  is an isomorphism for a type $A$, a family $B$ over $A$, and $a : A$.
\end{remark}





\medskip

Note that $\eta_A\,a = \lambda_{x:R}\,a$ for the example $\D A = A^R$
where the lex operation is exponentiation.
The isomorphisms of \cref{iso} are identities in this example.%
\footnote{This assumes that function types are implemented via dependent products.}
In fact, this will happen for all the example lex operations we will consider in this article.

\begin{remark} \label{D-reflection}
Recall our assumption that universes reflect the operation $\D$ on types and the operation $\DD$ on families.
\Cref{iso} shows that only the reflection of $\DD$ is essential.
If $\D$ is not reflected, we can define an isomorphic operation $\D' A = \DD (\epsilon_A)\, \unitF$ on types that is reflected.
The remaining structure of $\D$ transports across the isomorphism to define a lex operation $\D'$.
\end{remark}

\begin{remark}
Let $\mathcal{E}$ be a category with families~\cite{Dybjer95} modelling our type theory.
A lex operation in $\mathcal{E}$ can be defined from a pseudomorphism of cwfs with universes~\cite{KaposiHS19} from $\mathcal{E}$ to itself that is pointed as an endofunctor.
When working externally with a model, this is a convenient way of constructing a lex operation in it.
Note that the given pointing is then reconstructed by \cref{lex-operation-pointed}.
\end{remark}

\begin{remark}
For readers familiar with Martin Hofmann's semantic methodology~\cite{Hofmann97}, we note a concise description of lex operations expressed internally in presheaves over the category of contexts.
The object $\Type$ of types has the structure of a cwf with universes (context extension is given by sum types).
Up to the discussion of \cref{D-reflection}, a lex operation is a pseudomorphism of cwfs with universes from $\Type$ to itself.
This definition can be written in the language of two\hyp{}level type theory~\cite{AnnenkovCKS17}.
\end{remark}

\subsection{\texorpdfstring{$\D$}{D}\hyp{}modal types}

The notion of lex operation is defined at the level of ``pure'' dependent type theory,
without assuming any notion of path types. In presence of path types, we automatically
have the following preservation property.

\begin{theorem}\label{lex}
  Let $\D$ be a lex operation.
  Then $\D$ preserves equivalences.
\end{theorem}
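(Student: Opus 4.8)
The plan is to reduce the statement to the fact that $\D$ preserves homotopies, and then simply transport quasi-inverse data across $\D$. Recall the standard characterisation: a map $f : A \to B$ is an equivalence precisely when it admits a quasi-inverse, that is, a map $g : B \to A$ together with homotopies $\alpha : \prod_{a:A}\Id_A(g(f\,a),a)$ and $\beta : \prod_{b:B}\Id_B(f(g\,b),b)$; conversely, any such triple $(g,\alpha,\beta)$ exhibits $f$ as an equivalence. It therefore suffices to manufacture a quasi-inverse of $\D f$ out of one for $f$.

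The key observation is that the action of $\D$ on functions is itself an internal function $\D_{A,B} : (A\to B)\to(\D A\to \D B)$ sending $f$ to $\D f$, as is implicit in the very phrasing of the structure (``we have $\D f:\D A\to\D B$ if $f:A\to B$''), and as is visible in the canonical example $\D A=A^{R}$, where $\D_{A,B}$ is post-composition. In the presence of path types we may apply $\ap$ to this term: for $p,q:A\to B$, a path $\gamma:\Id_{A\to B}(p,q)$ gives $\ap_{\D_{A,B}}(\gamma):\Id_{\D A\to \D B}(\D p,\D q)$. Combined with function extensionality (available in our univalent setting), this is exactly the statement that $\D$ preserves homotopies: if $p$ and $q$ are pointwise path-equal, then a homotopy is reified by funext as a path $p=q$, transported by $\ap_{\D_{A,B}}$ to a path $\D p=\D q$, and converted back to a homotopy from $\D p$ to $\D q$. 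This is the single genuinely homotopical input of the proof.

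Now let $f:A\to B$ be an equivalence with quasi-inverse $(g,\alpha,\beta)$, and take $\D g:\D B\to \D A$ as the candidate inverse. Functoriality supplies the strict equalities $\D g\circ\D f=\D(g\circ f)$, $\D f\circ\D g=\D(f\circ g)$, $\D(\id_A)=\id_{\D A}$ and $\D(\id_B)=\id_{\D B}$. Feeding the homotopy $\alpha$ (from $g\circ f$ to $\id_A$) through the preservation of homotopies yields a homotopy from $\D(g\circ f)$ to $\D(\id_A)$, hence, using those strict equalities, a homotopy from $\D g\circ\D f$ to $\id_{\D A}$; symmetrically $\beta$ yields a homotopy from $\D f\circ\D g$ to $\id_{\D B}$. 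Thus $\D g$ is a quasi-inverse of $\D f$, and $\D f$ is an equivalence.

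The main obstacle is the second paragraph, namely that $\D$ preserves homotopies, and it rests on two points: that the morphism action of $\D$ is genuinely internal (a term of function type), so that $\ap$ may be applied to it, and that function extensionality is at hand so that homotopies and paths between functions are interchangeable. The first is part of the data of a lex operation and is the crux; the second is mild in our context. It is worth noting that this argument uses only the functoriality of $\D$ together with the internality of its action on functions, and makes no use of the preservation of $\Unit$ or of dependent sums, nor of the pointing of \cref{lex-operation-pointed} or the isomorphisms of \cref{iso}; this is why the preservation of equivalences comes ``for free'' for any such internal strict functor.
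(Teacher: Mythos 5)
Your proof is correct and follows essentially the same route as the paper: the paper's argument likewise observes that path-equal functions are sent to path-equal functions (by path induction, which is exactly your $\ap$ applied to the internal action of $\D$ on function types) and then transports quasi-inverse data using strict functoriality. Your version merely spells out the role of function extensionality and of the internality of $\D$'s action, which the paper leaves implicit.
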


\begin{proof}
  Note that if $f_0$ and $f_1$ are path equal then so are $\D f_0$ and $\D f_1$ by path induction.
  It follows that if $f$ and $g$ are inverses, then so are $\D f$ and $\D g$.
\end{proof}

Avigad \etal~\cite{AvigadKL15} explain how to build a fibration category from a model of
dependent type theory. \Cref{lex} implies that any lex operation defines an
endomorphism of the associated fibration category. A lex operation preserves
all finite homotopy limits (\eg, contractible types, homotopy pullbacks, homotopy equalizers, homotopy fibers, \ldots).

\medskip

In presence of path types, we can also define the following important notion of modal types.

\begin{definition}
  A type $A$ is called $\D$\hyp{}modal if the unit map $\eta_A:A\rightarrow \D A$ is an equivalence.
\end{definition}

\begin{proposition}\label{sum}
  If $A$ is $\D$\hyp{}modal and $B$ is a family of types over $A$, then
  $B$ is a family of $\D$\hyp{}modal types over $A$ if, and only if, $T = \sum_AB$ is $\D$\hyp{}modal.
\end{proposition}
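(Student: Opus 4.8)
The plan is to reduce the biconditional to a fiberwise statement about the unit maps $\eta_{Ba}$, exploiting the comparison isomorphism $\D(\sum_A B)\cong\sum_{\D A}\DD B$ that comes with the lex structure. Write $T=\sum_A B$ and let $\phi\colon\D T\to\sum_{\D A}\DD B$ be the isomorphism $w\mapsto((\D\pi_1)\,w,(\dd\pi_2)\,w)$, with inverse the pairing $\pairF{u}{v}$. By naturality of the unit $\eta$ (\cref{lex-operation-pointed}) applied to $\pi_1\colon T\to A$ we have $(\D\pi_1)(\eta_T(a,b))=\eta_A(\pi_1(a,b))=\eta_A\,a$, so $\phi(\eta_T(a,b))=(\eta_A\,a,\mu_a\,b)$, where $\mu_a\colon Ba\to(\DD B)(\eta_A\,a)$, $\mu_a\,b=(\dd\pi_2)(\eta_T(a,b))$, is the map induced by $\eta_T$ on the strict fibers over $a$.

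The heart of the argument is the following claim: under the isomorphism $\rho_a\colon(\DD B)(\eta_A\,a)\xrightarrow{\sim}\D(Ba)$ of \cref{iso}, the fiber map $\mu_a$ is identified with the unit of $Ba$, i.e.\ $\rho_a\circ\mu_a=\eta_{Ba}$. Granting this, I factor $\phi\circ\eta_T=\Psi\circ\Theta$, where $\Theta\colon\sum_A B\to\sum_{a:A}\D(Ba)$ sends $(a,b)$ to $(a,\eta_{Ba}\,b)$ and $\Psi\colon\sum_{a:A}\D(Ba)\to\sum_{\D A}\DD B$ sends $(a,w)$ to $(\eta_A\,a,\rho_a^{-1}\,w)$. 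Since $A$ is $\D$\hyp{}modal, $\eta_A$ is an equivalence, so $\Psi$ — a map of total spaces lying over the equivalence $\eta_A$ with fiber maps $\rho_a^{-1}$ that are isomorphisms — is an equivalence, and $\phi$ is an isomorphism.

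It follows that $\eta_T$ is an equivalence if, and only if, $\Theta$ is. But $\Theta$ lies over $\id_A$ and has fiber maps $\eta_{Ba}$, so by the standard fact that a fiberwise transformation is an equivalence on total spaces exactly when it is a fiberwise equivalence, $\Theta$ is an equivalence if, and only if, each $\eta_{Ba}$ is an equivalence, that is, exactly when every $Ba$ is $\D$\hyp{}modal. Reading this chain of logical equivalences in both directions settles the proposition.

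The main obstacle is the claim $\rho_a\circ\mu_a=\eta_{Ba}$, which I would establish by unwinding both sides relative to the constant family $B\circ\epsilon_a$ over $\Unit$, where $\epsilon_a=\lambda_{x:\Unit}\,a$. Writing $P=\sum_\Unit(B\circ\epsilon_a)$ and $\iota\colon P\to T$, $(x,b)\mapsto(a,b)$, naturality of $\eta$ at $\iota$ gives $\eta_T(a,b)=(\D\iota)(\eta_P(\unit,b))$, and naturality of $\dd$ rewrites $\mu_a\,b$ as $\dd(\pi_2\circ\iota)(\eta_P(\unit,b))$; one checks that $\pi_2\circ\iota$ is precisely the dependent second projection of $P$. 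The second and third pairing laws then yield $\pairF{\unitF}{\mu_a\,b}=\eta_P(\unit,b)$, and applying the non\hyp{}dependent projection $P\to Ba$ (whose image under $\D$ realizes $\rho_a$ by \cref{iso}) together with naturality of $\eta$ gives $\rho_a(\mu_a\,b)=\eta_{Ba}\,b$. The one delicate point is the bookkeeping between the dependent projection entering $\dd\pi_2$ and the non\hyp{}dependent projection entering $\rho_a$, both taken relative to the constant family $B\circ\epsilon_a$.
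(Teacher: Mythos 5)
Your proof is correct and follows essentially the same route as the paper: the paper's map $f\colon (a,b)\mapsto(\eta_A\,a,\eta_{B\,a}\,b)$ is exactly your composite $\Psi\circ\Theta$, and the equivalence-of-total-spaces argument over the equivalence $\eta_A$ is the same citation to the standard fiberwise criterion. The only difference is that you explicitly verify the identification $\phi\circ\eta_T=\Psi\circ\Theta$ via the claim $\rho_a\circ\mu_a=\eta_{B\,a}$, a bookkeeping step (correctly carried out using \cref{iso} and naturality) that the paper leaves implicit in the sentence ``$f$ is an equivalence if and only if $\eta_T$ is.''
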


\begin{proof}
  Let $f$ be the map
  $T\rightarrow \sum_{\D A}{\DD B},\ (a,b)\mapsto (\eta_A\,a,\eta_{B\,a}\,b)$.
  Since $\eta_A$ is an equivalence,
  each map $\eta_{B\,a}$ is an equivalence if, and only if, the map $f$ is an equivalence~\cite{HoTT}.
  But $f$ is an equivalence if and only if $\eta_{T}$ is an equivalence.
\end{proof}

\subsection{Abstract notion of descent data}


\begin{theorem}\label{main}
  The following conditions are equivalent, for a lex operation $\D$
  \begin{enumerate}
  \item $\D$ defines a modality as axiomatized in~\cite{Quirin16,RijkeSS17}
  \item the map $\D\eta_A$ is an equivalence, and $\D\eta_A$ and $\eta_{\D A}$
    are path equal
  \end{enumerate}
\end{theorem}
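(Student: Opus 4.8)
The plan is to prove the two implications separately, working with the characterization of a modality as a $\Sigma$\hyp{}closed reflective subuniverse (the equivalence of this with the other axiomatizations in \cite{Quirin16,RijkeSS17} being available off the shelf). The advantage of this route is that the lex structure already supplies the $\Sigma$\hyp{}closure: \cref{sum} shows that whenever $A$ is $\D$\hyp{}modal and $B$ is a family of $\D$\hyp{}modal types over $A$, the sum $\sum_A B$ is $\D$\hyp{}modal. So the real work is the reflective subuniverse part, namely the universal property of the unit together with idempotency (that $\D A$ is $\D$\hyp{}modal). Throughout I will use the strict naturality $\D f \circ \eta_A = \eta_B \circ f$ coming from \cref{lex-operation-pointed}.

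For $(1) \Rightarrow (2)$, assume $\D$ is a modality. Then each $\D A$ is modal, so $\eta_{\D A}$ is an equivalence by the definition of modal. For the path equality I would compare the two maps $\D\eta_A, \eta_{\D A} : \D A \to \D\D A$ using the universal property of $\eta_A$: since $\D\D A$ is modal, precomposition with $\eta_A$ is an equivalence on $(\D A \to \D\D A)$ and in particular reflects path equality, while naturality at $f = \eta_A$ gives $\D\eta_A \circ \eta_A = \eta_{\D A}\circ\eta_A$. Hence $\D\eta_A$ and $\eta_{\D A}$ are path equal, and $\D\eta_A$ is an equivalence because $\eta_{\D A}$ is.

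For the converse $(2) \Rightarrow (1)$, which is the substantial direction, assume $\D\eta_A$ is an equivalence and $\D\eta_A = \eta_{\D A}$ up to a path. The path equality immediately makes $\eta_{\D A}$ an equivalence, so $\D A$ is $\D$\hyp{}modal, giving idempotency. It remains to establish the universal property: for every $\D$\hyp{}modal type $B$, the precomposition $R\colon (\D A \to B) \to (A \to B)$, $g \mapsto g\circ\eta_A$, is an equivalence. I would write down an explicit inverse $L\,f = \eta_B^{-1}\circ \D f$, where $\eta_B^{-1}$ is the inverse of the equivalence $\eta_B$. Naturality gives $R(L\,f) = \eta_B^{-1}\circ\D f\circ\eta_A = \eta_B^{-1}\circ\eta_B\circ f \sim f$; and in the other direction $L(R\,g) = \eta_B^{-1}\circ\D g\circ\D\eta_A$, into which I substitute the path $\D\eta_A = \eta_{\D A}$ and then apply naturality for $g$ to reach $\eta_B^{-1}\circ\eta_B\circ g \sim g$. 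Hence $R$ is an equivalence, so $\D$ is a reflective subuniverse, and with \cref{sum} it is $\Sigma$\hyp{}closed, hence a modality.

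The main obstacle is not any single calculation but matching the abstract modality axioms exactly: checking that $\D$\hyp{}modality (defined via $\eta_A$ being an equivalence) is a mere proposition closed under equivalence, and that the non\hyp{}dependent universal property together with $\Sigma$\hyp{}closure really reproduces the chosen axiomatization (in particular its dependent eliminator). The one genuinely delicate point in the computation is that the two halves of $(2)$ are both needed and enter asymmetrically: the equivalence $\D\eta_A$ secures idempotency, whereas the \emph{path} $\D\eta_A = \eta_{\D A}$ is what makes $L$ a retraction of $R$. I would take care that this is handled at the right level of coherence, since $\eta_B^{-1}\circ\eta_B \sim \id$ and the substituted path hold only up to homotopy.
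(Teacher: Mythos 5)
Your proposal is correct and follows essentially the same route as the paper: idempotency from the path equality plus the equivalence $\D\eta_A$, $\Sigma$\hyp{}closure from \cref{sum}, and the universal property of $\eta_A$ established by exhibiting the explicit inverse $f \mapsto p_B \circ \D f$ (your $\eta_B^{-1}\circ\D f$) with the same two naturality computations. The only cosmetic difference is that you spell out the forward direction, which the paper simply delegates to the cited axiomatizations.
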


\begin{proof}
  The first condition implies the second using the results in~\cite{Quirin16,RijkeSS17}.

  Conversely, assume that the map $\D\eta_A$ is an equivalence, and $\D\eta_A$ and $\eta_{\D A}$
  are path equal. Then $\eta_{\D A}$ is an equivalence as well and each type $\D A$ is $\D$\hyp{}modal.
  \Cref{sum} shows that $\D$\hyp{}modal types are closed by dependent sum types.
  We thus only have to prove that the map
  \begin{equation*}
  F:(\D A\rightarrow B)\longrightarrow (A\rightarrow B) \qquad f\longmapsto f\circ\eta_A
  \end{equation*}
  is an equivalence if $B$ is $\D$\hyp{}modal~\cite{Quirin16}.

  Let $p_B$ be a map $\D B\rightarrow B$ such that $p_B\circ\eta_B$
  is path equal to $\id_B$. We define a map
  \begin{equation*}
  G:(A\rightarrow B)\longrightarrow (\D A\rightarrow B)\qquad u\longmapsto p_B\circ \D u
  \end{equation*}
  We then have $F(Gu) = p_B\circ \D u\circ\eta_A = p_B\circ\eta_B\circ u$ which is path equal to $u$
  and $G(Ff) = p_B\circ \D(f\circ \eta_A) = p_B\circ \D f\circ \D\eta_A$ which is path equal
  to $p_B\circ \D f\circ \eta_{\D A} = p_B\circ\eta_B\circ f$ which is path equal to $f$. Hence $G$
  is an inverse to $F$ and $F$ is an equivalence.
\end{proof}

\begin{definition}
  A {\em descent data operation} is a lex operation $\D$
  satisfying the equivalent conditions of \cref{main}.
\end{definition}

Note that the first condition of \cref{main} is a (homotopy)
proposition. The second condition is the one which will be convenient to verify
for the main examples.


We write $\GG_\D(A)$ for the type (proposition) expressing that $A$ is $\D$\hyp{}modal.

\subsection{Closure properties}

 Let $\D$ be a descent data operation. 

\begin{lemma}\label{suff}
  For the map $\eta_A:A\rightarrow \D A$ to be an equivalence, it is enough to have
  a patch function $p_A:\D A\rightarrow A$ such that $p_A\circ\eta_A$ is path equal to the identity of $A$.
\end{lemma}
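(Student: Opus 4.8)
The plan is to upgrade the given one\hyp{}sided inverse $p_A$ into a genuine two\hyp{}sided homotopy inverse of $\eta_A$. By hypothesis $p_A\circ\eta_A$ is path equal to $\id_A$, so it only remains to exhibit a path between $\eta_A\circ p_A$ and $\id_{\D A}$; once both composites are path equal to the respective identities, $\eta_A$ is invertible and hence an equivalence.

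To produce that path I would run a short diagram chase combining naturality of the unit with the defining condition of a descent data operation. First, I apply naturality of $\eta$ (from \cref{lex-operation-pointed}) to the map $p_A\colon\D A\to A$, which yields the strict equation
\begin{equation*}
\eta_A\circ p_A = \D p_A\circ\eta_{\D A}.
\end{equation*}
Next, I use the second condition of \cref{main}, that $\eta_{\D A}$ is path equal to $\D\eta_A$, to replace $\eta_{\D A}$ by $\D\eta_A$ on the right, so that $\eta_A\circ p_A$ becomes path equal to $\D p_A\circ\D\eta_A$. By functoriality this composite equals $\D(p_A\circ\eta_A)$, and since $\D$ carries path\hyp{}equal maps to path\hyp{}equal maps (the observation underlying the proof of \cref{lex}), the hypothesis that $p_A\circ\eta_A$ is path equal to $\id_A$ gives that $\D(p_A\circ\eta_A)$ is path equal to $\D\id_A=\id_{\D A}$. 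Chaining these path equalities shows $\eta_A\circ p_A$ is path equal to $\id_{\D A}$, as required.

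I do not expect a real obstacle here: the argument is a two\hyp{}step rewrite. The only point requiring care is to apply naturality to $p_A$ (the map running from $\D A$ down to $A$) rather than to $\eta_A$ — it is precisely this choice that surfaces the factor $\eta_{\D A}$, which the descent condition then lets us trade for $\D\eta_A$ and fold into the functorial image of the assumed retraction. Note also that the argument uses only the path equality between $\D\eta_A$ and $\eta_{\D A}$ from \cref{main}; the statement that $\D\eta_A$ is an equivalence is not needed for this direction.
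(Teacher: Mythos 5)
Your argument is correct and is essentially the paper's own proof read in the opposite direction: the paper computes $\id_{\D A} = \D(p_A\circ\eta_A) = \D p_A\circ\D\eta_A \simeq \D p_A\circ\eta_{\D A} = \eta_A\circ p_A$, using exactly the same three ingredients (functoriality of $\D$ and its preservation of path equality, the path between $\D\eta_A$ and $\eta_{\D A}$, and naturality of $\eta$ at $p_A$). Nothing further is needed.
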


\begin{proof}
  If $p_A$ is such a patch function, we have
  $\id_{\D A} = \D(\id_A) = \D(p_A\circ\eta_A) = \D p_A\circ \D\eta_A$ which is path equal
  to $\D p_A\circ\eta_{\D A} = \eta_A\circ p_A$. Hence $p_A$ is an inverse of $\eta_A$ and $\eta_A$
  is an equivalence.
\end{proof}

\begin{lemma}\label{DD}
For $B$ a family of types over $A$ and any $u:\D A$, the type $(\DD B)\,u$ is $\D$\hyp{}modal.
\end{lemma}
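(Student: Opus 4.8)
The plan is to reduce the claim to \cref{sum} together with the observation, already used in the proof of \cref{main}, that every type of the form $\D X$ is $\D$\hyp{}modal. First I would record that $\D$\hyp{}modality is invariant under equivalence: if $e : X \to Y$ is an equivalence, the naturality square for $\eta$ (\cref{lex-operation-pointed}) gives $\eta_Y \circ e = \D e \circ \eta_X$, and since $\D$ preserves equivalences (\cref{lex}) the map $\D e$ is an equivalence; hence by two\hyp{}out\hyp{}of\hyp{}three $\eta_Y$ is an equivalence whenever $\eta_X$ is. In particular this transfer applies along the structural isomorphisms of the lex operation.

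The heart of the argument is then to apply \cref{sum} to the $\D$\hyp{}modal base $\D A$ and the family $\DD B$ over it. By \cref{sum}, the family $\DD B$ consists of $\D$\hyp{}modal types if, and only if, the total type $\sum_{\D A}\DD B$ is $\D$\hyp{}modal. But the defining structure of a lex operation provides an isomorphism $\D(\sum_A B) \cong \sum_{\D A}\DD B$ over $\D A$, and $\D(\sum_A B)$ is $\D$\hyp{}modal because it is of the form $\D X$. By the invariance recorded above, $\sum_{\D A}\DD B$ is therefore $\D$\hyp{}modal, and so each fibre $(\DD B)\,u$ with $u : \D A$ is $\D$\hyp{}modal.

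It is worth noting why \cref{iso} does not by itself suffice: that remark identifies $(\DD B)(\eta_A\,a)$ with $\D(B\,a)$, which would only handle fibres over points of $\D A$ lying in the image of $\eta_A$. Since the statement ranges over arbitrary $u : \D A$, we genuinely need the total\hyp{}space argument via \cref{sum} to cover all fibres uniformly. Accordingly, the only step requiring any care is the bookkeeping of $\D(\sum_A B) \cong \sum_{\D A}\DD B$ as an equivalence \emph{over} $\D A$, so that \cref{sum} applies verbatim; the remainder is a direct chain of the cited results.
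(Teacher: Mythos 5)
Your proposal is correct and follows essentially the same route as the paper's proof: the paper likewise observes that $\D(\sum_A B)$ is $\D$\hyp{}modal, transfers this along the structural isomorphism to $\sum_{\D A}\DD B$, and then applies \cref{sum} over the $\D$\hyp{}modal base $\D A$ to conclude that each fibre $(\DD B)\,u$ is $\D$\hyp{}modal. The extra details you supply (invariance of modality under equivalence, and the remark on why \cref{iso} alone is insufficient) are sound elaborations of steps the paper leaves implicit.
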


\begin{proof}
  Since $\D(\sum_AB)$ is $\D$\hyp{}modal, so is the isomorphic type
$\sum_{\D A}{\DD B}$.
Using \cref{sum}, we have that $(\DD B)\,u$
is a $\D$\hyp{}modal type for any $u:\D A$.
\end{proof}

\begin{proposition}\label{univ}
The type $\UU_\D = \sum_{\UU}\GG_\D$ is a $\D$\hyp{}modal type.
\end{proposition}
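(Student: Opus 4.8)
The plan is to apply \cref{suff}: it suffices to exhibit a patch function $p:\D\UU_\D\rightarrow\UU_\D$ with $p\circ\eta_{\UU_\D}$ path equal to the identity. Consider the underlying-type family $P=\pi_1:\UU_\D\rightarrow\UU$, sending $(A,m)$ to $A$. Since universes reflect $\DD$ — and here the base $\UU_\D$ need not be $\UU$-small, which is exactly the case the reflection assumption is designed to cover — we obtain a family $\DD P:\D\UU_\D\rightarrow\UU$. By \cref{DD}, the type $(\DD P)\,w$ is $\D$-modal for every $w:\D\UU_\D$. I would therefore set $p\,w = ((\DD P)\,w,\ell_w)$, where $\ell_w:\GG_\D((\DD P)\,w)$ is the modality proof supplied by \cref{DD}; because $\GG_\D$ is a proposition, the choice of $\ell_w$ is irrelevant.

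It then remains to check that $p\circ\eta_{\UU_\D}$ is path equal to $\id_{\UU_\D}$, and by function extensionality it is enough to do so pointwise. Fix $(A,m):\UU_\D$, so that $A$ is $\D$-modal. Instantiating \cref{iso} with the family $P$ at the point $(A,m)$ (note $P(A,m)=A$) yields an isomorphism $(\DD P)(\eta_{\UU_\D}(A,m))\cong\D A$. Since $A$ is $\D$-modal, $\eta_A$ is an equivalence and hence $\D A\simeq A$; composing, the underlying type of $p(\eta_{\UU_\D}(A,m))$ is equivalent to $A$. By univalence this equivalence is a path in $\UU$, and since $\GG_\D$ is a proposition this path lifts to a path $p(\eta_{\UU_\D}(A,m))=(A,m)$ in $\UU_\D$. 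Thus $p$ is a patch function and \cref{suff} shows that $\eta_{\UU_\D}$ is an equivalence, that is, $\UU_\D$ is $\D$-modal.

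I expect the only delicate point to be the pointwise computation of $p\circ\eta_{\UU_\D}$: one must recognise that \cref{iso} identifies $(\DD P)(\eta_{\UU_\D}(A,m))$ not with $A$ itself but only, up to isomorphism, with $\D A$, so the modality hypothesis $m$ is genuinely needed to return to $A$. Univalence together with the propositionality of $\GG_\D$ is then what converts the resulting equivalence into the required path in $\UU_\D$.
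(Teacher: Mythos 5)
Your proof is correct and is essentially the paper's own argument: both use the reflection of $\DD$ to form $\DD\pi_1:\D\UU_\D\to\UU$, \cref{DD} to land in $\D$-modal types (giving the factorization through $\pi_1$, i.e.\ your patch function), \cref{iso} plus modality of $A$ plus univalence to identify $(\DD\pi_1)(\eta\,X)$ with $\pi_1 X$, propositionality of $\GG_\D$ (the paper phrases this as $\pi_1$ being an embedding) to lift that path to $\UU_\D$, and \cref{suff} to conclude. The only difference is presentational — you construct the left inverse pointwise where the paper argues via a homotopy-commuting triangle.
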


\begin{proof}
  Consider the diagram
\[
\xymatrix{
  \D\UU_\D \ar[dr]^{\DD\pi_1} \\
  \UU_\D \ar[u]^{\eta} \ar[r]_{\pi_1} & \UU \rlap{.}
}
\]
It commutes up to homotopy since $(\DD\pi_1)(\eta\, X)$ is isomorphic to
$\D(\pi_1\, X)$ by \cref{iso}, which
is path equal to $\pi_1\, X$ for any $X:\UU_\D$ by univalence(!). Note also that $\pi_1$
is an embedding since $\GG_\D$ is a family of propositions.

Since $(\DD\pi_1)\, A$ is $\D$\hyp{}modal by \cref{DD} for any $A:\D\UU_\D$,
the map $\DD\pi_1:\D\UU_\D\rightarrow\UU$
factorizes through $\pi_1:\UU_\D\rightarrow\UU$ and the corresponding
map $\D\UU_\D\rightarrow\UU_\D$ is a left inverse of $\eta:\UU_\D\rightarrow \D\UU_\D$
since $\pi_1$ is an embedding. Hence $\UU_\D$ is $\D$\hyp{}modal by \cref{suff}.
\end{proof}

\subsection{Model associated to a descent data operation}

We can now define an internal translation which provides a new model of
univalent type theory with higher inductive types
for any descent data operation $\D$, following the work in~\cite{Quirin16}. A
type $A,p$ of the new model
is a type $A$ {\em together with}
a proof $p$ that this type is $\D$\hyp{}modal, while an element of a pair $A,p$ is an element of $A$.






In order to interpret the type of natural numbers with the desired
computation rules (not covered in~\cite{Quirin16}),
we need to use the following higher inductive type:%
\footnote{To justify the use
  of such inductive definitions, we need some accessibility assumption on the functor
  $\D$ that will be satisfied in the examples.}
\begin{alignat*}{3}
  & \zero     & \quad & : & \quad & \Nat \\
  & \SUCC     & \quad & : & \quad & \Nat \rightarrow \Nat \\
  & \patch    & \quad & : & \quad & \D\,\Nat \rightarrow \Nat \\
  & \leftinv  & \quad & : & \quad & \textstyle\prod_{x : \Nat} \patch (\eta_\Nat\, x) =_{\Nat} x
\end{alignat*}

This is equivalent to the type $\D N$ where $N$ is the usual inductive
type with constructors $\zero$ and $\SUCC$, but the type $\D N$ does not satisfy
the required computation rules.

The same idea
applies to the interpretation of other inductive types
such as the $W$\hyp{}type.

\medskip

It also works for higher inductive types. For instance
the suspension of a type $A$ will be defined as
\begin{alignat*}{3}
  & \north, \south    & \quad & : & \quad & T \\
  & \merid     & \quad & : & \quad & {A}\rightarrow \north =_{T} \south \\
  & \patch    & \quad & : & \quad & \D T \rightarrow T \\
  & \leftinv  & \quad & : & \quad & \textstyle\prod_{z : T} \patch (\eta_{T}\, z) =_{T} z
\end{alignat*}

Note that having $\D$ defined as a {\em strict} functor is essential for such definitions.

\subsection{Generalization to a family of descent data operations}\label{Suniv}

More generally, if we have a {\em family} of descent data operations $\D_S$ indexed
by a given type $S:\Cov$, with corresponding maps $\eta^S_A:A\rightarrow \D_SA$,
we can consider $\GG_{\Cov}(A)$ to be the proposition $\prod_{S:\Cov} \GG_{\D_S}(A)$
and $\UU(\Cov)$ which is $\sum_{\UU}\GG_{\Cov}$. We use the slightly shorter notation $\UU_S$ to denote the previously defined type $\UU_{\D_S} = \sum_{\UU}\GG_{\D_S}$

We let the preorder $\D_1\leqslant \D_2$ on descent data operations mean that any $\D_1$\hyp{}modal
type is $\D_2$\hyp{}modal. We say that $\Cov$ is {\em filtered} if we have
$\exists_{S:\Cov}.\,\D_S\leqslant \D_{S_1}\wedge \D_S\leqslant \D_{S_2}$ for any
$S_1, S_2:\Cov$.%
\footnote{Existence is defined as the propositional truncation of the dependent sum type~\cite{HoTT}.}

\begin{theorem}\label{filter}
If $\Cov$ is filtered then $\UU(\Cov)$ satisfies $\GG_{\Cov}$.
\end{theorem}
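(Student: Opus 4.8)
The plan is to mirror the proof of \cref{univ}, handling one operation $\D_S$ at a time and reducing to \cref{suff}. Unfolding $\GG_{\Cov} = \prod_{S:\Cov}\GG_{\D_S}$, it suffices to fix $S:\Cov$ and show that $\UU(\Cov)$ is $\D_S$\hyp{}modal. As in \cref{univ} I consider the projection $\pi_1:\UU(\Cov)\rightarrow\UU$, which is an embedding since $\GG_{\Cov}$ is a product of propositions and hence a family of propositions, together with the reflected family $\DD_S\pi_1:\D_S\UU(\Cov)\rightarrow\UU$. By \cref{iso} and univalence the triangle $(\DD_S\pi_1)\circ\eta^S_{\UU(\Cov)}\sim\pi_1$ commutes up to homotopy, exactly as there. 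If for every $A:\D_S\UU(\Cov)$ the type $(\DD_S\pi_1)\,A$ satisfies $\GG_{\Cov}$, then $\DD_S\pi_1$ factors through the embedding $\pi_1$, the resulting map $\D_S\UU(\Cov)\rightarrow\UU(\Cov)$ is a left inverse of $\eta^S_{\UU(\Cov)}$, and \cref{suff} concludes.

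The entire content is therefore the claim that for every $A:\D_S\UU(\Cov)$ and every $T:\Cov$ the type $Z := (\DD_S\pi_1)\,A$ is $\D_T$\hyp{}modal; this is the main obstacle and the only place the hypothesis is used. Since $\D_T$\hyp{}modality is a proposition, I may invoke filteredness to obtain an actual $S':\Cov$ with $\D_{S'}\leqslant\D_S$ and $\D_{S'}\leqslant\D_T$. By $\D_{S'}\leqslant\D_T$ it then suffices to prove that $Z$ is $\D_{S'}$\hyp{}modal.

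To establish $\prod_{A:\D_S\UU(\Cov)}\GG_{\D_{S'}}((\DD_S\pi_1)\,A)$ I would use the dependent elimination principle of the modality $\D_S$ (available by \cref{main}, following \cite{Quirin16,RijkeSS17}): a section of a family of $\D_S$\hyp{}modal types over $\D_S\UU(\Cov)$ is determined by its values on the image of $\eta^S$. The predicate $A\mapsto\GG_{\D_{S'}}((\DD_S\pi_1)\,A)$ is indeed $\D_S$\hyp{}modal\hyp{}valued, because ``being $\D_{S'}$\hyp{}modal'' is a $\D_{S'}$\hyp{}modal proposition, hence $\D_S$\hyp{}modal by $\D_{S'}\leqslant\D_S$. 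On the image of $\eta^S$ the claim holds because $(\DD_S\pi_1)(\eta^S_{\UU(\Cov)}\,x)$ is isomorphic to $\D_S(\pi_1\,x)$ by \cref{iso}, and $\pi_1\,x$, lying in $\UU(\Cov)$, is modal for every operation in the family, in particular both $\D_S$ (so $\D_S(\pi_1\,x)\simeq\pi_1\,x$) and $\D_{S'}$.

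The one ingredient outside the cited toolbox is that, for a lex modality, the proposition ``$Y$ is modal'' is itself modal; I would either cite this from \cite{RijkeSS17} or derive it from left exactness, using that fibers and the $\isContr$ predicate of maps between modal types are modal. The crux is precisely the interpolation by a common refinement $S'$: it simultaneously makes the induction predicate $\D_S$\hyp{}modal (via $\D_{S'}\leqslant\D_S$) and delivers the target $\D_T$\hyp{}modality (via $\D_{S'}\leqslant\D_T$). Without a lower bound below both $S$ and $T$ neither step would go through, so the filtered hypothesis is exactly what the argument requires.
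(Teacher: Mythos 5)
Your overall strategy is sound and genuinely diverges from the paper's at the key step. Both arguments fix $S$, reduce via the commuting triangle from the proof of \cref{univ} and via \cref{suff} to showing that $(\DD_S\pi_1)\,A$ is $\D_T$\hyp{}modal for every $T:\Cov$ and $A:\D_S\UU(\Cov)$, and use filteredness only to interpolate a common refinement $S'$ with $\D_{S'}\leqslant\D_S$ and $\D_{S'}\leqslant\D_T$. Where you differ is in proving that $(\DD_S\pi_1)\,A$ is $\D_{S'}$\hyp{}modal: the paper does this with no modality induction at all, by factoring $\pi_1$ as $\UU(\Cov)\rightarrow\UU_{S'}\rightarrow\UU$; since $\UU_{S'}$ is $\D_{S'}$\hyp{}modal by \cref{univ} and hence $\D_S$\hyp{}modal, the unit $\eta^S_{\UU_{S'}}$ is invertible, so $\DD_S\pi_1$ factorizes through $\UU_{S'}\rightarrow\UU$ and therefore lands in $\D_{S'}$\hyp{}modal types. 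Your route through the dependent elimination principle of the modality $\D_S$ (available once \cref{main} is in place) is a legitimate alternative that trades the reuse of \cref{univ} for a modality induction plus \cref{iso} on the image of $\eta^S$.

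There is, however, one flaw to repair: the auxiliary lemma you single out --- that for a lex modality $\D$ the proposition $\GG_\D(Y)$ is itself $\D$\hyp{}modal --- is false in general. Take the descent data operation $\D A = A^R$ of \cref{exp} with $R=\Empty$: then $\D A$ is $\Unit$, the $\D$\hyp{}modal types are the contractible ones, and $\GG_\D(Y)$ is $\isContr(Y)$, which for $Y=\Empty$ is empty and hence not $\D$\hyp{}modal. So the first link of your chain ``$\GG_{\D_{S'}}(Z)$ is $\D_{S'}$\hyp{}modal, hence $\D_S$\hyp{}modal'' does not hold. Fortunately the instance you actually need is true, by the very closure properties you mention but applied with respect to $\D_S$ rather than $\D_{S'}$: the type $Z=(\DD_S\pi_1)\,A$ is already $\D_S$\hyp{}modal by \cref{DD}, and $\D_{S'}Z$ is $\D_{S'}$\hyp{}modal, hence $\D_S$\hyp{}modal by $\D_{S'}\leqslant\D_S$; thus $\eta^{S'}_Z$ is a map between $\D_S$\hyp{}modal types, and $\isEquiv(\eta^{S'}_Z)$ --- a product over $\D_{S'}Z$ of $\isContr$ of fibers --- is $\D_S$\hyp{}modal by closure of $\D_S$\hyp{}modal types under dependent sums (\cref{sum}), dependent products and identity types (the latter two by \cite{RijkeSS17}, applicable by \cref{main}). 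With that substitution your induction predicate is indeed $\D_S$\hyp{}modal\hyp{}valued and the argument goes through.
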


\begin{proof}
  For any $\D_S\leqslant \D_{S_1}$ in $\Cov$, $\UU_S$ is $\D_S$\hyp{}modal by \cref{univ}
  and so $\D_{S_1}$\hyp{}modal,
  and hence $\eta^{S_1}:\UU_{S}\rightarrow \D_{S_1}\UU_{S}$ has an inverse.
  It follows that the map $\DD_{S_1}\pi_1:\D_{S_1}\UU_{\Cov}\rightarrow \UU$
  factorizes through $\UU_S\rightarrow\UU$ and
  hence that for any $A:\D_{S_1}\UU_{\Cov}$ the type $(\DD_{S_1}\pi_1)\,A$
  is $\D_S$\hyp{}modal.

  If $\Cov$ is filtered, this implies that the type
  $(\DD_{S_1}\pi_1)\,A$ is $\D_{S_2}$\hyp{}modal for any $S_2$ in $\Cov$. Hence
  the map $\DD_{S_1}\pi_1:\D_{S_1}\UU_{\Cov}\rightarrow \UU$
  factorizes through $\UU_{\Cov}\rightarrow \UU$
  and the corresponding map $\DD_{S_1}\UU_{\Cov}\rightarrow\UU_{\Cov}$
  is a left inverse of $\UU_{\Cov}\rightarrow \D_{S_1}\UU_{\Cov}$.
  Hence $\UU(\Cov)$ is $\D_{S_1}$\hyp{}modal for any $S_1$ in $\Cov$
  by \cref{suff}.
\end{proof}

This shows that for a family of descent data operations satisfying the hypothesis of \cref{filter},
we still get a model of univalent type theory (with higher inductive types), interpreting a type
as a type together with a proof that this type is modal for each descent data operation.
Of all type formers, only the universe has to deal with interaction between the elements of the given family of descent data operations.

\subsection{Example}\label{exp}

If $R$ is a {\em proposition}, then for the lex operation defined by $\D A = A^R$ the two maps
$\D\eta_A$ and $\eta_{\D A}$ are path equal equivalences and hence exponentiation
defines a descent data operation in that case.

 The next \lcnamecref{sec:presheaf-models} will define a new kind of descent data operation for any presheaf model.

\section{Cubical presheaf models}\label{sec:presheaf-models}

\subsection{Cubical models}\label{cubical-sets}

Cubical models are presheaf models of univalent type theory specified by two parameters, an \emph{interval object} $\II$ and a \emph{cofibration classifier} $\cofuniv$.
Formally, we say that a \emph{cubical model} is a presheaf category with the following structure, as in Orton and Pitts~\cite{OrtonP16}.
\begin{itemize}
\item
The interval object $\II$ is connected and has distinct points $0$ and $1$.
Exponentiation with $\II$ has a right adjoint.
We also assume that $\II$ has the structure of a bounded distributive lattice.%
\footnote{%
This assumption simplifies one of our arguments (\cref{key2}).
However, our results also apply to the Cartesian variation of cubical models of Angiuli \etal~\cite{AngiuliBCHHL}).
There, one removes this hypothesis and instead adds that the diagonal $\II \to \II \times \II$ is a cofibration.
}
\item
The universal cofibration $\top \colon 1 \to \cofuniv$ is a levelwise decidable inclusion.
In the internal language of presheaves, we will work with $\cofuniv$ as a universe of certain propositions and leave the decoding function (given by equality with $\top : \cofuniv$) implicit.
Isomorphic cofibrations are equal.%
\footnote{This assumption is not strictly speaking necessary, but simplifies the theory.}
The interval endpoint inclusions $0, 1 \colon 1 \to \II$ are cofibrations.
Cofibrations are closed under finite union (finite disjunction), composition (dependent conjunction), and universal quantification over $\II$.
\end{itemize}
It is then known, following the work in~\cite{CCHM15,OrtonP16,CHM18}, how to define a model of univalent type theory with higher inductive types.

\subsection{Presheaves in cubical models}\label{subsec:presheaves-in-cubical-sets}

For the remainder of this \lcnamecref{sec:presheaf-models}, we fix a cubical model given by presheaves over a small category $\BB$.
We refer to this as the \emph{base model} (for example, it can be cubical sets).
We write $I,J,K,\dots$ for the objects of $\BB$.
We have the interval object by $\II_{\BB}$ and the cofibration classifier $\cofuniv_{\BB}$.


Let $\CC$ be another small category.
We write $X,Y,Z,\dots$ for its objects.
We describe possibilities for turning presheaves over $\CC \times \BB$ into a cubical model.
For the interval object $\II$, we simply take $\II(X,I) = \II_{\BB}(I)$.
For the cofibration classifier, we have two reasonable options:
\begin{options}
\item
The first example is simply to take $\cofunivConst(X,I) = \cofuniv_{\BB}(I)$.
\item
The second example is to define an element $\psi$ of $\cofunivLW(X,I)$ to be
a family $\psi_f$ in $\cofuniv_{\BB}(I)$ for $f \colon Y\rightarrow X$ such that
$\psi_f\leqslant \psi_g$ if furthermore $g \colon Z\rightarrow Y$.
We then define the restriction operation $\psi (f,l)$ to be the family
$\psi(f,l)_g = \psi_{f g} l$ for $f \colon Y\rightarrow X$ and $g \colon Z\rightarrow Y$.
\end{options}
The motivation for the second example is that if
$\cofuniv_{\BB}(I)$ is the collection of (decidable) sieves on $I$, then
$\cofunivLW(X,I)$ becomes the collection of (decidable) sieves on $(X,I)$.%
\footnote{Classically, this corresponds to having all monomorphisms as cofibrations.}

The interval object $\II$ and any of the choices $\cofunivConst$ and $\cofunivLW$ fit all the requirements listed in \cref{cubical-sets}.
This turns presheaves over $\CC \times \BB$ into a cubical model.
In particular, we get a model of univalent type theory (and higher inductive types).
We are going to analyse the model obtained using the choice $\cofunivConst$ for the cofibration classifier and then indicate how to adapt these results for $\cofunivLW$.

\medskip

In this model, a context $\Gamma$ is interpreted by a presheaf
over $\CC\times\BB$ so a family of sets $\Gamma(X,I)$ with suitable
restriction maps $\rho\mapsto \rho(f,l)$ with $f:Y\rightarrow X$ in $\CC$ and
$l:J\rightarrow I$ in $\BB$.

A dependent type $A$ over $\Gamma$ is then given by a presheaf over the category
of elements of $\Gamma$: for any $\rho$ in $\Gamma(X,I)$ we have a set
$A\rho$ with suitable restriction maps $A\rho \rightarrow A\rho(f,l)$ denoted by $u\mapsto u(f,l)$
together with a filling operation (see~\cite{CCHM15,OrtonP16}).
We write $\Type(\Gamma)$ for the collection of all types with a composition operation
over $\Gamma$.
The set $\Elem(\Gamma,A)$ is then the set of sections: a family $a\rho$
in $A\rho$ such that $(a\rho)(f,l) = a(\rho(f,l))$ for any $\rho$
in $\Gamma(X,I)$ and $f,l$ map of codomain $X,I$.


Given a constructive Grothendieck universe $U$ (see~\cite{Aczel98}) containing $\BB$ and $\CC$,
we write $\Type_{U}(\Gamma)$ for the set of $U$\hyp{}types, such that each set
$A\rho$ is in $U$. The presheaf $\Type_U$ is then represented by a fibrant
type $\UU$, which is univalent~\cite{CCHM15}.

\subsection{Internal language description}\label{subsec:internal-language-description}

This was an external description of the presheaf model.
It is also possible to describe this model
using
the internal logic of the presheaf topos over $\CC\times\BB$ as in~\cite{OrtonP16,CHM18}
but also using
the internal logic of the presheaf topos over $\BB$.
We will use both descriptions.

In the internal logic of the presheaf topos over $\BB$,
a context of the presheaf model over $\CC$ is interpreted as a family of ``spaces'' $\Gamma(X)$
with restriction maps $\rho\mapsto\rho f$ for $f:Y\rightarrow X$.
(Each space $\Gamma(X)$ is itself a presheaf over $\BB$ with $\Gamma(X)(I) = \Gamma(X,I)$.)
A dependent type $A$ over $\Gamma$ is given by a family of spaces $A\rho$
for $\rho$ in $\Gamma(X)$ with restriction maps $u\mapsto uf$. The presheaf
$\cofunivConst$ of cofibration is the constant presheaf $\cofunivConst(X) = \cofuniv_{\BB}$.
The interval $\II$ is the constant interval $\II(X) = \II_{\BB}$.

It will be convenient to introduce the following notation: if
$\gamma$ is an element of $\Gamma(X)^{\II}$ and
$f:Y\rightarrow X$, we write $\gamma f^+$ in $\Gamma(Y)^{\II}$
for $\lambda_i\,\gamma(i)f$. Similarly if $u(i)$ is a section
in $A\gamma(i)$ we write $uf^+$ for $\lambda_i\,u(i)f$.


A {\em filling operation}
(see~\cite{OrtonP16,CHM18}) for $A$ is given by an operation $c_A$ which takes as
argument $\gamma$ in $\Gamma(X)^{\II_{\BB}}$ and $\psi$ in $\cofunivConst(X) = \cofuniv_{\BB}$
and a family of elements $u(i)$ in $A\gamma(i)f$ on the extent
$\psi\vee i = 0$. (There is a dual operation with $i=1$ instead.)
It produces an element $c_A(X,\gamma,\psi,u)(i)$ in $A\gamma(i)$ such that
\begin{conditions}
\item $c_A(X,\gamma,\psi,u)(i) = u(i)$ on $\psi\vee i=0$,
\item $c_A(X,\gamma,\psi,u)(i)f = c_A(Y,\gamma f^+,\psi, u f^+)(i)$ for $f:Y\rightarrow X$.
\end{conditions}
Given such an operation, we also call $A$ \emph{fibrant} (note this is structure rather than property).

If $A$ is a type over $\Gamma$, we get a family of dependent types $A(X)$
over $\Gamma(X)$, each of them having a filling operation, but furthermore
these filling operations commute with the restriction maps.

Similarly an {\em extension operation} for $A$, witnessing that $A$ is contractible
(see~\cite{CCHM15}), is given by an operation $e_A$ which takes as argument $\rho$
in $\Gamma(X)$ and a partial element $u$ on the extent $\psi$ and produces
an element $e_A(X,\rho,\psi,u)$ in $A\rho$ such that
\begin{conditions}
\item $e_A(X,\rho,\psi,u) = u$ on $\psi$,
\item $e_A(X,\rho,\psi,u)f = e_A(Y,\rho f,\psi, uf)$ for $f:Y\rightarrow X$.
\end{conditions}
Given such an operation, we also call $A$ \emph{trivially fibrant} (again, this is structure rather than property).

If $A$ is contractible, each $A(X)$ is a contractible family of types over $\Gamma(X)$.
But conversely, it may be that each $A(X)$ has an extension operation $e_A(X)$ which
does not commute with restriction (see \cref{subsec:examples}).
Similarly, a map $\sigma:A\rightarrow B$
which is an equivalence defines a family of equivalences $\sigma_X:A(X)\rightarrow B(X)$
but it may be that each map $\sigma_X$ is an equivalence, without $\sigma$ being
an equivalence.

\begin{remark} \label{presheaf-model-comparison}
We have a canonical map from $\cofunivConst$ to $\cofunivLW$ sending $\psi : \cofunivConst(X)$ to the constant family on $\psi$.
This map commutes (up to isomorphism) with the decoding to propositions.
It follows that there is a natural map from extension operations for $\cofunivConst$ to extension operations for $\cofunivLW$, and the same holds for filling operations.
It follows that a (contractible) type for the cubical presheaf model for $\cofunivConst$ is naturally also a (contractible) type for the cubical presheaf model for $\cofunivLW$.
\end{remark}

\begin{remark} \label{presheaf-model-groupoid}
Let $\CC$ be a groupoid.
Then for $\psi : \cofunivLW(X)$ and $f : Y \to X$, we have $\psi_{\id_X} \leq \psi_f \leq \psi_{f f^{-1}} = \psi_{\id_X}.$
It follows that $\psi$ is the constant family on $\psi_{\id_X}$.
Thus, the map $\cofunivConst \to \cofunivLW$ from \cref{presheaf-model-comparison} is invertible.
It follows that the cubical presheaf models for $\cofunivConst$ and $\cofunivLW$ are the same.
We thank Emily Riehl for this observation.
\end{remark}

\subsection{Examples} \label{subsec:examples}

Let $\BB$ be a concrete cube category, for instance the Cartesian~\cite{AngiuliBCHHL}, distributive lattice, or de Morgan one~\cite{OrtonP16,CCHM15}.
Then we have a nerve functor from groupoids to cubical sets in the sense of presheaves over $\BB$.
In this way, we can see any groupoid as a cubical set with a canonical filling operation.

For the first example, let $\CC$ be the group $\ints/(2)$.
Let $\tau$ be the non\hyp{}trivial element of this group.
A context is a space with an involutive action $\rho\mapsto\rho\tau$.
A dependent type $A$ over $\Gamma$ has also an involutive action $A\rho\rightarrow A\rho\tau$ denoted by $u\mapsto u\tau$ with a filling operation which is equivariant, meaning $c_A(\gamma,\psi,u)(i)\tau = c_A(\gamma \tau^+,\psi, u\tau^+)(i)$.
Let $A$ be the groupoid with two isomorphic objects swapped by $\tau$.
Then $A$ is pointwise contractible, but is not contractible in the presheaf model, since it has no global point.
Another way to describe this example is that the unique map $A\rightarrow \Unit$ is a pointwise equivalence, but is not an equivalence.

For the second example, let $\CC$ be the poset on objects $\bot, 0, 1$ with $\bot < 0$ and $\bot < 1$.
We define a global type $A$ as follows.
We take $A(0)$ and $A(1)$ to consist of a single object $a_0$ and $a_1$, respectively.
We take $A(\bot)$ to consist of an isomorphism between the restrictions of $a_0$ and $a_1$.
Then $A$ is levelwise contractible (\ie, $A(\bot), A(0), A(1)$ are contractible), but $A$ is not contractible since it has no global point.

We note that the second example is fixed by working with the cofibration classifier $\cofunivLW$.
However, as explained by \cref{presheaf-model-groupoid}, this does not apply to the first example.

\section{Homotopy descent data}

\subsection{A lex operation}
\label{sec:a-lex-operation}

In this \lcnamecref{sec:a-lex-operation}, we work in the internal language of the presheaf topos
over $\BB$.
We first define a lex operation on presheaf types, and then show that
this lex operation extends to types with a filling operation.

For any $A$ presheaf over $\Gamma$ we define $\E A$ presheaf over $\Gamma$.
An element $u$ of $(\E A)\rho$, for $\rho$ in $\Gamma(X)$ is given
by a family of elements $u(f)$ in $A\rho f$ for $f:Y\rightarrow X$.
We define the restriction $uf$ in $(\E A)\rho f$ by $uf(g) = u(fg)$
if $f:Y\rightarrow X$ and $g:Z\rightarrow Y$.

If $B$ is presheaf over $\Gamma.A$, we define $\EE(B)$ presheaf over  $\Gamma.{\E A}$. If
$\rho$ is in $\Gamma(X)$ and $u$ is in $(\E A)\rho$, then $\EE(B)(\rho,u)$ is the space of families
$v(f)$ in $B(\rho f,u(f))$.

We define a natural transformation $\alpha:A\rightarrow \E A$ by $(\alpha a)(f) = af$.

Next, we extend the action of $\E$ to types with a filling operation. Actually,
we define a filling operation $\E(c_A)$ on $\E A$ assuming only that $c_A$
is a {\em pointwise} filling operation on $A$.

\begin{proposition}\label{Efill}
  We can define a filling operation $\E(c_A)$ on $\E A$ if $c_A$ is a pointwise
  filling operation on $A$, in a way which commutes with substitution.
\end{proposition}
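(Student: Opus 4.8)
The plan is to \emph{define the filler on $\E A$ one index at a time}, reducing at each index $f \colon Y \to X$ to the given pointwise filling at the domain object $Y$. Concretely, suppose we are given $\gamma$ in $\Gamma(X)^{\II_{\BB}}$, a cofibration $\psi$ in $\cofunivConst(X) = \cofuniv_{\BB}$, and a partial family $u(i)$ in $(\E A)\gamma(i)$ on the extent $\psi \vee i = 0$. An element of $(\E A)\gamma(i)$ is a family assigning to each $f \colon Y \to X$ an element of $A\gamma(i)f = A(\gamma f^+)(i)$, so it suffices to specify the value at each such $f$. I would set
\[
\E(c_A)(X,\gamma,\psi,u)(i)(f) \;=\; c_A(Y,\, \gamma f^+,\, \psi,\, \lambda_j\, u(j)(f))(i),
\]
that is, apply the pointwise filling of $A$ at $Y$ to the restricted path $\gamma f^+$ in $\Gamma(Y)^{\II_{\BB}}$ and to the partial path $j \mapsto u(j)(f)$ in $A(\gamma f^+)$, which is defined on $\psi \vee j = 0$. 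Since $\E A$ imposes no compatibility constraint on its indexing families (an element is simply \emph{any} family over the maps $f$), this prescription produces an element of $(\E A)\gamma(i)$ directly, with naturality in $\BB$ inherited from $c_A$.

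Next I would verify the two defining conditions. Condition~1 (agreement with $u$ on $\psi \vee i = 0$) is immediate: on that extent the pointwise boundary condition for $c_A$ at $Y$ gives $c_A(Y, \gamma f^+, \psi, \lambda_j\, u(j)(f))(i) = u(i)(f)$ for every $f$, so the defined filler equals $u(i)$ there. The crux is Condition~2, commutation with restriction along a map $h \colon X' \to X$ of $\CC$. I would unfold both sides at an arbitrary index $g \colon Y \to X'$. On the left, restriction along $h$ is reindexing, so $(\E(c_A)(X,\gamma,\psi,u)(i)\,h)(g) = \E(c_A)(X,\gamma,\psi,u)(i)(hg)$, which by definition equals $c_A(Y, \gamma(hg)^+, \psi, \lambda_j\, u(j)(hg))(i)$. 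On the right, evaluating $\E(c_A)(X',\gamma h^+,\psi, u h^+)(i)$ at $g$ gives $c_A(Y, (\gamma h^+)g^+, \psi, \lambda_j\,(u h^+)(j)(g))(i)$. These coincide once one records the two bookkeeping identities $(\gamma h^+)g^+ = \gamma(hg)^+$ and $(u h^+)(j)(g) = u(j)(hg)$, both holding by the definitions of path restriction and of restriction in $\E A$. The conceptual point is that pointwise filling suffices precisely because $\E A$ absorbs the $\CC$-variation into its family structure: the filler at index $hg$ is computed by exactly the same instance $c_A(Y, \gamma(hg)^+, \ldots)$ that reappears after restricting along $h$, so no commutation of $c_A$ \emph{between different objects} of $\CC$ is ever invoked.

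Finally, commutation with substitution follows formally. For a context morphism $\sigma \colon \Delta \to \Gamma$, the construction $\E$ and the reindexing it uses are defined uniformly, and $c_A$ is assumed to commute with substitution as a pointwise operation; since the displayed definition of $\E(c_A)$ refers only to $c_A$ and to structural reindexing, the required equation is a direct unfolding. I expect all the genuine work to sit in Condition~2, but I anticipate it to be bookkeeping rather than a real obstacle: the design of $\E$ is exactly what converts the missing $\CC$-naturality of the pointwise filling into an automatically natural filling on $\E A$, and the only thing to watch is keeping the path argument and the partial-element argument correctly reindexed across the factorization $f = hg$, which is what the two identities above encode.
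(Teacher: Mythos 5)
Your proof is correct and takes essentially the same route as the paper's: you define $\E(c_A)(X,\gamma,\psi,u)(i)(f) = c_A(Y)(\gamma f^+,\psi,\lambda_j\,u(j)(f))(i)$ by deferring to the pointwise filler at the domain of each index $f$, and verify naturality by unfolding both sides of a restriction at an index $g$ to the same instance $c_A(Z)(\gamma(hg)^+,\psi,\lambda_j\,u(j)(hg))(i)$, exactly as in the paper. The only differences are presentational: you make explicit the boundary condition and the stability under substitution, which the paper leaves implicit.
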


\begin{proof}
  We assume that $A$ has a pointwise filling operation $c_A(X)$.
  We define then, for $f:Y\rightarrow X$
  \begin{equation*}
  \E(c_A)(X,\gamma,\psi,u)(i)(f) = c_A(Y)( \gamma f^+,\psi,u f^+)(i)
  \end{equation*}
  We can then check for $f:Y\rightarrow X$ and $g:Z\rightarrow Y$
  \begin{equation*}
  c_{\E A}(X,\gamma,\psi,u)(i)f(g) =
  c_A(Z)( \gamma (f g)^+,\psi,u (f g)^+)(i) =
  c_{\E A}(Y, \gamma f^+,\psi, u f^+)(i)(g)
  \end{equation*}
  and hence $c_{\E A}$ is natural in $X$.

  We can also define $\unitF$ in $\E \Unit$ by $\unitF(f) = \unit$
  and $\pairF{u}{v}:\E(\sum_A B)\rho$
  by $\pairF{u}{v}(f) = (u(f),v(f))$ for $u$ in $(\E A)\rho$ and
  $v$ in $(\EE B)(\rho,u)$, and check that all conditions for a lex operations are satisfied.

  Any universe $\UU$ reflects the operations $\E$ and $\EE$ since the Grothendieck universe $U$ used to construct $\UU$ was assumed to contain $\CC$.
\end{proof}

\begin{proposition}\label{Econtr}
  If $A$ is pointwise contractible then $\E A$ is contractible.
\end{proposition}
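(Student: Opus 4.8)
The plan is to mirror the proof of \cref{Efill}, replacing the filling operation with an extension operation. Recall that $A$ being pointwise contractible means each family $A(X)$ over $\Gamma(X)$ carries an extension operation $e_A(X)$, with no requirement that these commute with the restriction maps; proving $\E A$ contractible means exhibiting an extension operation $e_{\E A}$ that satisfies both the agreement condition on the extent and the naturality condition in the base object.

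First I would define the extension operation on $\E A$ by aggregating the pointwise ones. Given $\rho$ in $\Gamma(X)$ and a partial element $u$ of $(\E A)\rho$ on the extent $\psi$, I set
\begin{equation*}
e_{\E A}(X, \rho, \psi, u)(f) = e_A(Y)(\rho f, \psi, u(f)) \qquad (f : Y \to X),
\end{equation*}
where $u(f)$ is the partial element of $A\rho f$ obtained by evaluating the partial section $u$ at $f$. This is the exact analogue of the formula $\E(c_A)(X,\gamma,\psi,u)(i)(f) = c_A(Y)(\gamma f^+, \psi, u f^+)(i)$ used there for the filling operation.

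Next I would check the two conditions. The agreement condition $e_{\E A}(X,\rho,\psi,u) = u$ on $\psi$ reduces componentwise to the agreement condition for each $e_A(Y)$, since on $\psi$ we have $e_A(Y)(\rho f, \psi, u(f)) = u(f)$. For naturality, I would evaluate both sides of $e_{\E A}(X,\rho,\psi,u)\,g = e_{\E A}(Y, \rho g, \psi, u g)$ at an arbitrary $h : Z \to Y$ and use the definition of restriction in $\E A$, namely $(v g)(h) = v(g h)$: the left-hand side becomes $e_A(Z)(\rho(gh), \psi, u(gh))$ and the right-hand side becomes the same, so the condition holds by direct computation.

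The point worth emphasizing --- and what makes the statement go through at all --- is that naturality is automatic precisely because $\E A$ bundles together all restrictions: the component at $f : Y \to X$ is computed using the pointwise operation $e_A(Y)$ at the codomain $Y$, so restricting along $g$ merely reindexes $f \mapsto g f$ and lands on the matching pointwise operation. This is the same phenomenon responsible for the naturality of $\E(c_A)$ in \cref{Efill}, so there is no genuine obstacle here; the only care required is to keep track of the indices and to observe that the operations $e_A(Y)$ are invoked without ever asking them to commute with restriction.
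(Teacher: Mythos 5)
Your proof is correct and is essentially the paper's own argument: define $e_{\E A}(X,\rho,\psi,u)(f) = e_A(Y)(\rho f,\psi,u(f))$ componentwise from the pointwise extension operations and verify naturality via $(vg)(h)=v(gh)$, exactly mirroring the proof of \cref{Efill}. Your explicit distinction between the component $u(f)$ and the restriction $uf$ is a welcome clarification of the paper's slightly overloaded notation, but the substance is the same.
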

\begin{proof}
  We assume that $A$ has a pointwise extension operation $e_A(X)$.
  We define then, for $f:Y\rightarrow X$
  \begin{equation*}
  e_{\E A}(X,\rho,\psi,u)(f) = e_A(Y)(\rho f,\psi,uf)
  \end{equation*}
  We can then check for $f:Y\rightarrow X$ and $g:Z\rightarrow Y$
  \begin{equation*}
  e_{\E A}(X,\rho,\psi,u)f(g) =
  e_A(Z)(\rho f g,\psi,u f g) =
  e_{\E A}(Y,\rho f,\psi,u f)(g)
  \end{equation*}
  and hence $e_{\E A}$ is an extension operation for $\E A$ natural in $X$.
\end{proof}


In general, $\E$ may not be a descent data operation, since $\E A$ does not need to be $\E$\hyp{}modal.
The next \lcnamecref{sec:homotopy-descent-data} will use the lex operation $\E$ to define a descent data operation.

\subsection{Homotopy descent data}
\label{sec:homotopy-descent-data}

In this \lcnamecref{sec:homotopy-descent-data}, unless explicitly stated,
we work in the internal language of the presheaf model over $\CC\times\BB$.
Starting from the lex operation $\E$, we define a new lex operation
$\D$.
As before, we first define $\D$ on presheaves, and then show that it extends
to a lex operation on presheaves with a filling operation.
On presheaves with a filling operation,
$\D$ will be  a descent data operation.

We let $\PP_n$ be the subpresheaf
of $\II^{n+1}$ of elements $(i_0,i_1,\dots,i_n)$ satisfying $i_0=1\vee\dots\vee i_n=1$.

\medskip

Let $s_k:\II^{n+1}\rightarrow \II^n$ be the map which omits the $k$th component,
for $k = 0,\dots, n$. Note that $s_k\vec{i}$ is in $\PP_{n-1}$
if $\vec{i}$ is in $\PP_n$ and $i_k=0$.


\begin{definition}
  An element of $\D A$ is given by a family 
  $u(\vec{i})$ in $\E^{n+1}A$ defined on $\PP_n$
  and satisfying the {\em compatibility conditions}\footnote{It is suggestive to think of the elements
    of $\D A$ as {\em choice sequences} \cite{TroelstraD88b} extended in a {\em spatial} rather than
    {\em temporal} dimension.}
$u(\vec{i}) = \E^k(\alpha) u(s_k\vec{i})$ on $i_k=0$.
\end{definition}

For instance we have
\begin{align*}
u(0,i_1,i_2) &= \alpha u(i_1,i_2) &
u(i_0,0,i_2) &= \E(\alpha) u(i_0,i_2) &
u(i_0,i_1,0) &= \E^2(\alpha) u(i_0,i_1)
\end{align*}

We have an element $u(\vec{1})$ in each $\E^{n+1}A$.
We have a path $u(1,i)$ between $\alpha\,u(1)$ and $u(1,1)$
and a path $u(i,1)$ between $\E(\alpha)\,u(1)$ and $u(1,1)$ in $\E^2A$.
But, in general, we need further higher coherence conditions.

\medskip

We define $\eta_A : A\rightarrow \D A$ by $(\eta_A\, a)(i_0,i_1,\dots,i_n) = \alpha^{n+1} a$.

If $A$ is a family of types over $\Gamma$ we define $\D A$ family of types
over $\Gamma$ by $(\D A)\rho = \D(A\rho)$.

\begin{proposition}\label{Dfill}
  If $A$ is a family of types with a pointwise filling operation,
  then $\D A$ has a filling operation.
\end{proposition}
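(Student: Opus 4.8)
The plan is to build the filling operation on $\D A$ out of the filling operations on the presheaves $\E^{n+1}A$, which exist and are natural by iterating \cref{Efill} starting from the given pointwise filling of $A$. Since an element of $\D A$ is a compatible family $u(\vec i)$ of elements of $\E^{n+1}A$ indexed by $\vec i \in \PP_n$, the first idea is to fill each component $u(\vec i)$ separately with $c_{\E^{n+1}A}$ and to assemble the results. The two filling equations and commutation with substitution would then transfer immediately, since each $c_{\E^{n+1}A}$ satisfies them; so the only real content is to check that the filled family again satisfies the compatibility conditions $w(\vec i) = \E^k(\alpha)\,w(s_k\vec i)$ on $i_k = 0$, i.e. that it is once more an element of $\D A$.

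This is where I expect the main obstacle. Preserving the face $i_k = 0$ by purely componentwise filling would require the map $\E^k(\alpha)\colon \E^n A \to \E^{n+1}A$ to commute strictly with the fillings. For $k < n$ this holds: $\E^k(\alpha)$ is $\E^k$ applied to $\alpha\colon \E^{n-k}A \to \E^{n-k+1}A$, and since $n-k \ge 1$ the source $\E^{n-k}A$ already carries a natural filling, with which $\alpha$ commutes, and applying $\E$ preserves this commutation. For the top face $k = n$, however, $\E^n(\alpha)$ reduces to $\alpha\colon A \to \E A$, and commutation there amounts to naturality of the filling of $A$ — precisely what we lack, since $A$ is only pointwise fibrant. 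So naive componentwise filling breaks the top face.

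The fix I would use is to not rely on any commutation but to prescribe the boundary faces and fill relative to them, proceeding by induction on the dimension $n$. For $n = 0$ the region $\PP_0$ is the point $i_0 = 1$ and I set $w(\vec{1}) = c_{\E A}(\gamma,\psi,u)(j)$, filling in a fresh direction $j$. For the inductive step, each face $\{i_k = 0\} \cap \PP_n$ is a copy of $\PP_{n-1}$ via $s_k$, so the union $\bigvee_k (i_k = 0)$ is a cofibration on which I can impose the value $\E^k(\alpha)\,w(s_k\vec i)$ coming from the already-constructed level $n-1$. I then define $w(\vec i)$ by filling in $j$ with the enlarged cofibration $\psi \vee \bigvee_k (i_k = 0)$, taking the given partial element $u(\vec i)$ on $\psi \vee (j = 0)$ and the prescribed value $\E^k(\alpha)\,w(s_k\vec i)$ on each $i_k = 0$. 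Because a filling agrees with its input on the whole extent, the output then obeys $w(\vec i) = \E^k(\alpha)\,w(s_k\vec i)$ on $i_k = 0$ by construction, so $w$ lies in $\D A$.

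It remains to verify that this prescription is legitimate, and this is the part requiring care. First, the enlarged boundary data must be coherent: on an overlap $i_k = 0 \wedge i_l = 0$ the two prescriptions $\E^k(\alpha)\,w(s_k\vec i)$ and $\E^l(\alpha)\,w(s_l\vec i)$ must agree, which follows from the cubical identities relating the maps $\E^k(\alpha)$ together with the inductive hypothesis that $w$ at level $n-1$ already satisfies the compatibility conditions; and on the overlap of a face with $\psi \vee (j = 0)$ the prescription agrees with $u$ because $u$ is itself a partial element of $\D A$ and $w$ extends $u$ at level $n-1$. Granting this coherence, the two filling equations, naturality in the base, and commutation with substitution all transfer directly from $c_{\E^{n+1}A}$, and the dual operation (filling from $i = 1$) is defined identically. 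This yields the required filling operation on $\D A$.
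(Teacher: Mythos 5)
Your proof is correct and takes essentially the same route as the paper: by induction on $n$, fill $\E^{n+1}A$ against the enlarged cofibration $\psi \vee (j{=}0) \vee \bigvee_k (i_k{=}0)$, prescribing $\E^k(\alpha)\,v(s_k\vec i)$ on the faces so that the compatibility conditions hold on the nose. Your preliminary diagnosis of why naive componentwise filling fails at the face $k=n$ (where $\alpha_A$ would have to commute with the merely pointwise filling of $A$), and your explicit coherence check on overlapping faces, are correct elaborations of points the paper leaves implicit.
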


\begin{proof}
  We use that each $\E^{n+1}A$ has a (uniform) filling operation by \cref{Efill}
  hence is a family of types in the model over $\CC\times\BB$.
  We assume given $\gamma$ in $\Gamma^{\II}$ and $\psi$ in $\cofunivConst$ and a partial
  element $u_j$ in $(\D A)\gamma(j)$ defined over $\psi\vee j=0$. We explain
  how to define a total extension $v_j$ in $(\D A)\gamma(j)$.
  For this we define $v_j(\vec{i})$ in $\E^{n+1}A$  by induction on $n$.
  Since $\E^{n+1}A$ has a filling operation, we apply this filling operation
  to the partial element equal to $u_j(\vec{i})$ on $\psi\vee j=0$
  and equal to $\E^k(\alpha)\, {v_j}(s_k(\vec{i}))$ if $i_k = 0$.
\end{proof}

\begin{corollary}\label{Dlex}
  $\D$ defines a lex operation.
\qed
\end{corollary}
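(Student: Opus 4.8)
The plan is to verify directly the data making up a lex operation, reducing every clause to the corresponding clause for $\E$ used iteratively. The guiding observation is that $\D A$ is a limit, over the diagram formed by the presheaves $\PP_n$ and the face maps $s_k$, of the iterated types $\E^{n+1}A$, glued along the maps $\E^k(\alpha)$. Since each iterate $\E^{n+1}$ is again a lex operation (composites of lex operations are lex, which is immediate from the axioms) and these iterates are connected by the natural transformation $\alpha$, all of the lex structure of $\E$ descends levelwise to the compatible families constituting $\D$.

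First I would give the functorial action: for $f : A \to B$ set $(\D f\, u)(\vec{i}) = \E^{n+1}f\,(u(\vec{i}))$ on $\PP_n$. This sends compatible families to compatible families because $\E^{n+1}f$ commutes with $\E^k(\alpha)$ by naturality of $\alpha$, and the identities $\D(g\circ f)=\D g\circ\D f$ and $\D\,\id_A=\id_{\D A}$ hold because they hold for each $\E^{n+1}$. For the unit, each $\E^{n+1}\Unit$ is isomorphic to $\Unit$, so the constant family $\unitF(\vec{i})=\unitF$ is an element of $\D\Unit$ and any element of $\D\Unit$ agrees with it at every $\vec{i}$; this is preservation of $\Unit$ up to isomorphism.

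For sum preservation I would define the family operation $\DD B$ over $\D A$: for $u:\D A$, an element of $(\DD B)\,u$ is a compatible family $v(\vec{i})$ lying over $u(\vec{i})$ in the iterated family of $\E^{n+1}$, satisfying the analogue of the compatibility conditions glued along the family-level version of the maps $\E^k(\alpha)$; naturality $\DD(B\circ f)=\DD B\circ\D f$ is inherited from that of $\E$. The section operation $\dd$, the pairing $\pairF{u}{v}(\vec{i})=\pairF{u(\vec{i})}{v(\vec{i})}$, and the three pairing equations are then all defined and checked levelwise, each reducing at a fixed $\vec{i}$ to the corresponding structure and equation for the lex operation $\E^{n+1}$. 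That universes reflect $\D$ and $\DD$ follows from the same property of $\E$ and $\EE$ recorded in the proof of \cref{Efill}, since a compatible family of $\UU$-small data is again $\UU$-small.

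The main obstacle I anticipate is bookkeeping rather than conceptual content: one must confirm that the levelwise structure maps --- the pairing, the section operation $\dd$, and the isomorphism witnessing naturality of $\DD$ --- are compatible with the face maps $s_k$ and the gluing maps $\E^k(\alpha)$, so that they really do preserve compatibility of families. This compatibility follows in each case from naturality of $\alpha$ together with the fact that the lex structure of $\E$ commutes with $\E$ under iteration; once it is recorded, every lex-operation axiom for $\D$ is immediate from the corresponding axiom for $\E^{n+1}$, and, combined with the filling operation from \cref{Dfill}, this shows $\D$ is a lex operation.
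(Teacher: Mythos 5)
Your proposal is correct and matches the paper's (largely implicit) argument: the paper leaves \cref{Dlex} without written proof beyond \cref{Dfill}, the intended content being exactly the levelwise transfer of the lex structure of $\E^{n+1}$ to compatible families, with compatibility preserved by naturality of $\alpha$. You have simply written out the verification the paper leaves to the reader.
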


A similar argument as the one for \cref{Dfill}
using \cref{Econtr} instead proves the following.

\begin{proposition}\label{Dcontr} \leavevmode
\begin{parts}
\item
If $A$ is a pointwise contractible family of types over $\Gamma$, then $\D A$ is contractible.
\item
If $B$ is a pointwise contractible family of types over a family of types $A$ over $\Gamma$, then $\DD B$ is contractible over $\D A$.
\qed
\end{parts}
\end{proposition}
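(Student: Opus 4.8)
The plan is to run the argument of \cref{Dfill} with extension operations in place of filling operations, drawing on \cref{Econtr}. Since a contractible type is in particular pointwise contractible, \cref{Econtr} iterates to show that each $\E^{n+1}A$ is contractible, hence carries an extension operation $e_{\E^{n+1}A}$ commuting with restriction. (Note also that $\D A$ is fibrant by \cref{Dfill}, as pointwise contractibility entails pointwise fibrancy, so that asserting contractibility of $\D A$ makes sense.)

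For part (i) I would construct an extension operation $e_{\D A}$ as follows. Given $\rho$ in $\Gamma(X)$, a cofibration $\psi$ in $\cofunivConst$, and a partial element $u$ of $\D A$ defined on $\psi$, I define the components $v(\vec i)$ in $\E^{n+1}A$ of the desired total element by induction on $n$. At stage $n$ the faces $i_k = 0$ of $\PP_n$ are already determined, via $s_k\vec i$ in $\PP_{n-1}$, by the lower stages. I then apply $e_{\E^{n+1}A}$ to the partial element on $\PP_n$ defined over the extent $\psi \vee i_0 = 0 \vee \dots \vee i_n = 0$ that equals $u(\vec i)$ on $\psi$ and equals $\E^k(\alpha)\,v(s_k\vec i)$ on $i_k = 0$, obtaining $v(\vec i)$ on all of $\PP_n$.

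The two defining properties of an extension operation follow immediately from this recipe: on $\psi$ one has $v(\vec i) = u(\vec i)$, so $v = u$ there; and on $i_k = 0$ one has $v(\vec i) = \E^k(\alpha)\,v(s_k\vec i)$, so $v$ satisfies the compatibility conditions and is a genuine element of $\D A$. Naturality of $e_{\D A}$ in $X$ is checked exactly as in \cref{Econtr}: restricting $v(\vec i)$ along $f \colon Y \to X$ is computed by the same recursion at $Y$ on the restricted data, because $e_{\E^{n+1}A}$, the insertions $\E^k(\alpha)$, and the maps $s_k$ all commute with restriction.

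The only step requiring genuine verification — and the main obstacle — is that the data prescribed at stage $n$ really constitutes a well-defined partial element, i.e.\ that its values agree on overlaps of the extent. On $\psi \wedge (i_k = 0)$ this is the identity $u(\vec i) = \E^k(\alpha)\,u(s_k\vec i)$, which holds because $u$, being a partial element of $\D A$, already satisfies the compatibility conditions on $\psi$. On $(i_k = 0)\wedge(i_l = 0)$ with $k < l$ it is the cocycle equation $\E^k(\alpha)\,v(s_k\vec i) = \E^l(\alpha)\,v(s_l\vec i)$; expanding both sides via the stage\hyp{}$(n-1)$ compatibility and the cubical identity $s_{l-1}s_k = s_k s_l$, this reduces to the cosimplicial identity $\E^k(\alpha)\circ\E^{l-1}(\alpha) = \E^l(\alpha)\circ\E^k(\alpha)$ for the insertions, which is a consequence of the naturality of $\alpha$ and functoriality of $\E$. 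Finally, part (ii) is proved by the same induction in the dependent setting: the argument of \cref{Econtr} applies verbatim to show that each $\EE^{n+1}B$ is contractible over $\E^{n+1}A$, and the fiberwise extension operation for $\DD B$ over $\D A$ is then assembled exactly as above.
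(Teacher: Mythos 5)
Your proposal is correct and follows exactly the route the paper intends: the paper's entire proof is the remark that "a similar argument as the one for \cref{Dfill} using \cref{Econtr} instead proves the following," and you have simply spelled out that argument, including the overlap-compatibility check (via the identity $s_{l-1}s_k = s_k s_l$ and naturality of $\alpha$) that the paper leaves implicit even in \cref{Dfill}.
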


\begin{corollary}\label{Dequiv}
Let $\sigma : A \to B$ be map between fibrant families of types over $\Gamma$.
If $\sigma$ is pointwise an equivalence, then $\D\sigma$ is an equivalence.
\end{corollary}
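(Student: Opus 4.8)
The plan is to reduce the statement to the contractibility result \cref{Dcontr}(ii), via the standard characterization of an equivalence as a map with contractible homotopy fibers, together with the observation recorded after \cref{lex} that a lex operation preserves all finite homotopy limits, in particular homotopy fibers.

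First I would form the homotopy fiber family $\hFiber_\sigma$ over $B$, given by $\hFiber_\sigma(b)=\sum_{a:A}(\sigma\,a =_{B} b)$; this is fibrant since $A$ and $B$ are. As $\sigma$ is pointwise an equivalence, each component $\sigma_X:A(X)\to B(X)$ is an equivalence and hence has contractible homotopy fibers. Since homotopy fibers are built from dependent sums and identity types, which are formed pointwise, the family $\hFiber_\sigma$ is pointwise contractible over $B$.

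Next I would identify the homotopy fiber family of $\D\sigma$ with $\DD\hFiber_\sigma$. Starting from the total\hyp{}space equivalence $A\simeq\sum_{b:B}\hFiber_\sigma(b)$ over $B$, under which the first projection corresponds to $\sigma$, I would apply $\D$ and use that $\D$ preserves equivalences (\cref{lex}) together with the natural isomorphism $\D(\sum_B C)\cong\sum_{\D B}\DD C$ that is part of the lex structure. This yields an equivalence $\D A\simeq\sum_{v:\D B}(\DD\hFiber_\sigma)(v)$ over $\D B$ under which the first projection is identified with $\D\sigma$ up to homotopy. Consequently the homotopy fiber of $\D\sigma$ over any $v:\D B$ is $(\DD\hFiber_\sigma)(v)$.

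Finally, because $\hFiber_\sigma$ is pointwise contractible over $B$, \cref{Dcontr}(ii) shows that $\DD\hFiber_\sigma$ is contractible over $\D B$. Every homotopy fiber of $\D\sigma$ is therefore contractible, and so $\D\sigma$ is an equivalence. I expect the main obstacle to be the middle step: making precise that the homotopy fiber of $\D\sigma$ is exactly $\DD\hFiber_\sigma$. This amounts to transporting the homotopy\hyp{}commutative triangle relating $A$, $\sum_B\hFiber_\sigma$, and $B$ across $\D$ and checking that the induced projection agrees with $\D\sigma$, which relies simultaneously on the equivalence preservation of \cref{lex} and on the dependent\hyp{}sum isomorphism of the lex structure relating $\D$ and $\DD$.
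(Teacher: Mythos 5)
Your argument is correct and is essentially the paper's own proof, just spelled out in more detail: both reduce to the pointwise contractibility of $\hFiber(\sigma)$, apply \cref{Dcontr} to get contractibility of $\DD\hFiber(\sigma)$ over $\D B$, and use the lex structure of $\D$ to identify this with $\hFiber(\D\sigma)$. The middle step you flag as the main obstacle is exactly what the paper compresses into the phrase ``since $\D$ is a lex operation'' (via the preservation of finite homotopy limits noted after \cref{lex}), and your unfolding of it is the right one.
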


\begin{proof}
  The fiber $\hFiber(\sigma)$ defines a pointwise contractible family
  of types over $B$. Hence $\DD \hFiber(\sigma)$ is contractible over $\D B$.
  Since $\D$ is a lex operation,  $\hFiber(\D\sigma)$ is contractible over $\D B$
  and $\D\sigma$ is an equivalence.
\end{proof}

\begin{proposition}\label{key1}
Let $A$ be a fibrant family of types over $\Gamma$.
Then $\eta_A$ is pointwise an equivalence and $\D\eta_A$ is an equivalence.
\end{proposition}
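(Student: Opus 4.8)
The plan is to derive the second assertion from the first. The family $A$ is fibrant by hypothesis, and $\D A$ is fibrant by \cref{Dfill}, so $\eta_A \colon A \to \D A$ is a map between fibrant families over $\Gamma$. Once we know that $\eta_A$ is \emph{pointwise} an equivalence, \cref{Dequiv} applies with $\sigma = \eta_A$ and yields at once that $\D\eta_A$ is an equivalence. All the content therefore lies in the pointwise claim, which I would check at a fixed object $X$ of $\CC$, working in the internal language of the presheaf topos over $\BB$.

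For the pointwise statement the idea is to use the ``extra degeneracy'' carried by $\E$. Evaluation at the identity is a map $\mathrm{ev} \colon \E A \to A$, $v \mapsto v(\id_X)$, satisfying $\mathrm{ev} \circ \alpha = \id_A$. I would define a patch $p \colon \D A \to A$ by $p(u) = u(\vec 1)(\id_X)$, that is, by evaluating the level\hyp{}$0$ datum $u(1) \in \E A$ at $\id_X$. Since $(\eta_A\,a)(\vec i) = \alpha^{n+1}a$ and in particular $(\eta_A\,a)(1) = \alpha\,a$, we get $p(\eta_A\,a) = (\alpha\,a)(\id_X) = a$, so $p$ is a strict left inverse of $\eta_A$. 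To conclude that $\eta_A(X)$ is an equivalence it then remains to produce a path from $\eta_A \circ p$ to $\id_{\D A}$.

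Constructing this homotopy is the heart of the matter. The geometric input is that $\PP_n$ retracts onto the vertex $\vec 1$: using the lattice structure of $\II$, the join $(t,\vec i)\mapsto(i_0\vee t,\dots,i_n\vee t)$ stays in $\PP_n$ and carries $\vec i$ to $\vec 1$. The algebraic input is the extra degeneracy $\mathrm{ev}$, which at the vertex recovers $p(u)$ and rebuilds $\eta_A(p(u))$ through $\alpha$. Combining them, I would construct a path from $\eta_A(p(u))$ to $u$ as a term $w(t,\vec i)\in\E^{n+1}A$ by induction on $n$, solving at each level a filling problem in the fibrant type $\E^{n+1}A$ (fibrant by \cref{Efill}) whose tube along the faces $i_k=0$ is dictated by the compatibility condition $w(t,\vec i)=\E^k(\alpha)\,w(t,s_k\vec i)$ coming from the previous level and whose two ends $t=0,1$ are $\eta_A(p(u))(\vec i)=\alpha^{n+1}\bigl(u(1)(\id_X)\bigr)$ and $u(\vec i)$. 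The retraction is what guarantees that these prescribed faces are mutually compatible, so that the filling operation produces $w$ and the resulting family is a valid element of $\D A$ for each $t$. The main obstacle is precisely to organise this induction coherently, so that the face relations between level $n$ and level $n-1$ hold for every $t$ at once; this is exactly what the compatibility conditions defining $\D A$ and the contractibility of the spheres $\PP_n$ are there to make possible. With $p\circ\eta_A=\id_A$ and $\eta_A\circ p\simeq\id_{\D A}$ in hand, $\eta_A(X)$ is an equivalence for every $X$, which establishes the pointwise claim and hence, via \cref{Dequiv}, the proposition.
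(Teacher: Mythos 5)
Your overall architecture matches the paper's: the strict left inverse $p(u)=u(1)(\id_X)$ is exactly the map $G$ in the paper's proof, the identity $p\circ\eta_A=\id$ holds on the nose, and the reduction of the second assertion to the pointwise one via \cref{Dequiv} is the same. The gap is in the step you yourself call the heart of the matter, the homotopy $\eta_A\circ p\sim\id_{\D A}$, and it is a genuine gap rather than an omitted routine verification. First, the filling problem you pose is over\hyp{}constrained: you prescribe the tube along the faces $i_k=0$ \emph{and both caps} $t=0$ and $t=1$. A filling operation extends a partial element given on $\psi\vee t=0$, \ie a tube plus one cap; it cannot be forced to agree with a prescribed value at $t=1$. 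Solvability of a two\hyp{}cap problem is equivalent to the two caps being homotopic rel the tube, which is precisely what you are trying to establish, so the argument as stated is circular. Second, your geometric input, the retraction $(t,\vec{i})\mapsto\vec{i}\vee t$ of $\PP_n$ onto $\vec{1}$, does not produce a valid candidate either: the formula $w(t,\vec{i})=u(\vec{i}\vee t)$ fails the compatibility conditions for $0<t$ (on $i_k=0$ the $k$th coordinate of $\vec{i}\vee t$ is $t$, not $0$, so the defining relation of $\D A$ gives no information there), and its $t=1$ end is the constant family at $u(\vec{1})$, which is not an element of $\D A$.

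The missing idea in the paper's proof is an intermediate element obtained by \emph{prepending} a coordinate and a morphism: $\tilde{u}(\vec{i})(\vec{f})=u(1,\vec{i})(\id,\vec{f})$, which is a legitimate element of $\D A$. Both $u$ and $\eta_A(p\,u)$ are then joined to $\tilde{u}$ by explicit connection formulas, $v_k(\vec{i})(\vec{f})=u(k,\vec{i})(\id,\vec{f})$ and $u_k(\vec{i})(\vec{f})=u(1,k\wedge\vec{i})(\id,\vec{f})$; these satisfy the compatibility conditions for every $k$, and the relation $u(0,\vec{i})=\alpha\,u(\vec{i})$ is what makes $v_0=u$. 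No filling is needed at all; this is exactly where the lattice structure on $\II$ enters, and the paper's footnote notes that for Cartesian cubes one must replace these formulas by an induction on dimension, but each stage of that induction is a genuine one\hyp{}cap lifting problem, not the two\hyp{}cap problem you set up. To repair your argument, introduce $\tilde{u}$ and route the homotopy through it.
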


\begin{proof}
  For this \lcnamecref{key1}, we work in the presheaf model over $\BB$.
  If $\vec{f}$ is a composable chain of arrows we write $\mpF{\vec{f}}$ for its composition.

Let $A$ be a type over $\Gamma$. For $\rho$ in $\Gamma(X)$, an element $u$
  of $(\D A)\rho$ is a family of elements $u(\vec{i})(\vec{f})$ in $A\rho \mpF{\vec{f}}$
  satisfying   the compatibility conditions.
  For $a$ in $A\rho$ the element $\eta_A\, a$ is the family
  of element
  \begin{equation*}
  (\eta_A\, a)(\vec{i})(\vec{f}) = a\mpF{\vec{f}}
  \end{equation*}
  We define an inverse $G : DA(X) \to A(X)$ of $\eta_A(X)$ by taking $Gu$ to be the element
  $u(1)(\id_X)$. We then have $G(\eta_A\, a) = a$. The element
  $\eta_A\, (G\,u)$ satisfies
  \begin{equation*}
  (\eta_A\, (G\,u))(\vec{i})(\vec{f}) = (G\,u)\mpF{\vec{f}} = u(1)(\id)\mpF{\vec{f}}
  = u(1,\vec{0})(\id,\vec{f})
  \end{equation*}
  Define the element $\tilde{u}$ in $(\D A)\rho$ by
  $\tilde{u}(\vec{i})(\vec{f}) = u(1,\vec{i})(\id,\vec{f})$.
  We can define a homotopy
  \begin{equation*}
  u_k(\vec{i})(\vec{f}) = u(1,k\wedge \vec{i})(\id,\vec{f})
  \end{equation*}
  between $\eta_A\,(G\,u)$ and $\tilde{u}$ and we can define a homotopy
  \begin{equation*}
  v_k(\vec{i})(\vec{f}) = u(k,\vec{i})(\id,\vec{f})
  \end{equation*}
  between $u$ and $\tilde{u}$.%
  \footnote{At this point that we use that the object {\bf I} in $\BB$ has lattice
  operations but one could however instead define a homotopy in a more complex way by induction on
  the dimension for Cartesian cubes. The same remark applies for the proof of the next
  \lcnamecref{key2}.}
  By composition, there is a path between $u$ and $\eta_A\, (G\,u)$
  and $G$ is an inverse of $\eta_A(X)$.%

  This shows that $\eta_A$ is pointwise an equivalence.
  Then $D \eta_A$ is an equivalence by \cref{Dequiv}.
\end{proof}

One way to understand the definition of $\D$ from $\E$ is the following.
Being a pointed endofunctor, $\E$ defines a cosemisimplicial
diagram starting from $\E A$, and $\D A$ is a strict way to realize
the homotopy limit of this diagram using a $\PP$\hyp{}weighted limit.
We can think of $\PP$ as a cofibrant resolution of the constant diagram on $1$.
A remark is that $\E$, and hence each $\E^l$, preserves the $\PP$\hyp{}weighted limit defining $\D$.
In particular, an element of $\E^l(\D A)$ is determined
by a family 
$u(\vec{i})$ in $\E^{l+n+1}A$ satisfying
$u(\vec{i}) = \E^{l+k}(\alpha)\, u(s_k\vec{i})$ on $i_k = 0$.

\begin{proposition}\label{key2}
  Let $A$ be a fibrant family of types over $\Gamma$.
  We can build a path between $\eta_{\D A}$ and $\D\eta_A$.
\end{proposition}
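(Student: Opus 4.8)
The plan is to reason, as in the proof of \cref{key1}, inside the internal language of the presheaf model over $\BB$. There an element $u$ of $(\D A)\rho$, for $\rho$ in $\Gamma(X)$, is a family $u(\vec i)(\vec f)$ in $A\rho\mpF{\vec f}$ indexed by $\vec i$ in $\PP_n$ and a composable chain $\vec f = (f_0,\dots,f_n)$, and I would first rewrite its compatibility conditions in chain form. For an interior zero $i_k=0$ ($k<n$) the condition $u(\vec i)=\E^k(\alpha)\,u(s_k\vec i)$ composes two adjacent arrows, $u(\vec i)(\dots,f_k,f_{k+1},\dots)=u(s_k\vec i)(\dots,f_kf_{k+1},\dots)$, whereas for the final zero $i_n=0$ it restricts the value along the last arrow, $u(\vec i)(f_0,\dots,f_n)=u(s_n\vec i)(f_0,\dots,f_{n-1})\,f_n$. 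Using that each $\E^{l}$ preserves the $\PP$\hyp{}weighted limit defining $\D$ (as observed just before the statement), an element of the common target $\D\D A$ unfolds into a doubly indexed family $w(\vec j)(\vec g)(\vec i)(\vec f)$ in $A\rho\mpF{\vec g,\vec f}$, with $\vec j$ in $\PP_m$ and $\vec g$ a chain of length $m+1$, satisfying the same two kinds of compatibility separately in the outer block $(\vec j,\vec g)$ and the inner block $(\vec i,\vec f)$.

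The key computation is to put both endpoints into the shape ``$u$ evaluated at the concatenated chain $(\vec g,\vec f)$''. Unfolding the definitions together with naturality of $\alpha$ gives $(\eta_{\D A}u)(\vec j)(\vec g)(\vec i)(\vec f)=\alpha^{m+1}(u(\vec i))(\vec g,\vec f)$ and $(\D\eta_A u)(\vec j)(\vec g)(\vec i)(\vec f)=\E^{m+1}(\alpha^{n+1})(u(\vec j))(\vec g,\vec f)$. I would then collapse a block of zeros using the compatibility conditions: repeatedly composing adjacent arrows identifies the first expression with $u(0^{m+1},\vec i)(\vec g,\vec f)$, and repeatedly restricting along the last arrow identifies the second with $u(\vec j,0^{n+1})(\vec g,\vec f)$. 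Here $u(\vec I)$ is meaningful for $\vec I$ in $\PP_{m+n+1}$ because $u$ is a family over all the $\PP_N$, and each of these tuples lies in $\PP_{m+n+1}$ since the surviving block still contains a coordinate equal to $1$.

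With both endpoints in this uniform form I would interpolate through the middle element $M(u)=u(\vec j,\vec i)(\vec g,\vec f)$ by means of the lattice meet, just as the connection homotopies in \cref{key1}. The first leg is $u(t\wedge\vec j,\vec i)(\vec g,\vec f)$, which runs from $\eta_{\D A}u$ at $t=0$ to $M(u)$ at $t=1$; the second leg is $u(\vec j,t\wedge\vec i)(\vec g,\vec f)$, connecting $\D\eta_A u$ at $t=0$ to $M(u)$ at $t=1$. Throughout each leg the tuple stays in $\PP_{m+n+1}$ because the untouched block always retains a $1$; this is precisely why a single linear interpolation does not work and the homotopy must pass through $M(u)$. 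Composing the first leg with the reverse of the second, using the filling operation on the fibrant type $\D\D A$ (\cref{Dfill}), then yields the required path between $\eta_{\D A}$ and $\D\eta_A$.

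The hard part will be verifying that $M(u)$ and the two legs are genuine elements of $\D\D A$, i.e. that they meet the doubly indexed compatibility conditions; everything else is bookkeeping. This amounts to matching the single compatibility of $u$ on $\PP_{m+n+1}$ against the two block conditions, and the only delicate point is the seam between the blocks. There the ``outer final'' condition of $\D\D A$ is restriction of a $\D A$\hyp{}valued family along the last arrow $g_m$, which, once the restriction maps of $\D A$ are unfolded, is exactly the composition $g_m f_0$ of the two arrows straddling the seam, and this coincides with the interior compatibility of $u$ at position $m$; the remaining positions line up with the interior and final conditions of the two blocks directly. As flagged in \cref{key1}, the use of meets relies on the lattice structure of $\II$; for the Cartesian variant one would instead construct the two legs by induction on the dimension.
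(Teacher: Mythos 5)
Your proposal is correct and follows essentially the same route as the paper's proof: both endpoints are rewritten as $u(\vec 0,\vec\jmath\,)$ and $u(\vec\imath,\vec 0)$ inside $\D^2A$ and then joined to the common middle element $u(\vec\imath,\vec\jmath\,)$ by the two connection homotopies $u(k\wedge\vec\imath,\vec\jmath\,)$ and $u(\vec\imath,k\wedge\vec\jmath\,)$, exactly as in the paper. The only difference is presentational: you unfold one level further into explicit composable chains of arrows (as the paper does in \cref{key1} and \cref{subsec:monoid} but not in its proof of \cref{key2}), which makes the seam verification more concrete but does not change the argument.
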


\begin{proof}
  An element of $(\D^2A)\rho$ is given by a family $v(\vec{i})(\vec{j})$
  in $\E^{n+m+2}A$
  satisfying the conditions
\begin{enumerate}
\item $v(\vec{i})(\vec{j}) = \E^k(\alpha)\, v(s_k\vec{i})(\vec{j})$ on $i_k=0$
\item $v(\vec{i})(\vec{j}) = \E^{n+1+l}(\alpha)\, v(\vec{i})(s_l\vec{j})$ on $j_l=0$
\end{enumerate}
Given $u$ in $(\D A)\rho$ we define an element $\tilde{u}$ in $(\D^2A)\rho$
by $\tilde{u}(\vec{i})(\vec{j}) = u(\vec{i},\vec{j})$.

We compute, for $u$ in $(\D A)\rho$
\begin{equation*}
(\eta_{\D A}\, u)(\vec{i})(\vec{j}) = \alpha^{n+1}\, u(\vec{j}) = {u}(\vec{0},\vec{j})
\end{equation*}
and we have a homotopy connecting this map to $\tilde{u}$ by defining
\begin{equation*}
v_k(\vec{i})(\vec{j}) = u(\vec{i}\wedge k,\vec{j}).
\end{equation*}
We also have
\begin{equation*}
((\D\eta_A)\, u)(\vec{i})(\vec{j}) =
\E^{n+1}(\alpha^{m+1})\,u(\vec{i}) = {u}(\vec{i},\vec{0})
\end{equation*}
and we have a homotopy connecting this map to $\tilde{u}$ by defining
\begin{equation*}
w_k(\vec{i})(\vec{j}) = u(\vec{i},k\wedge \vec{j})
\end{equation*}
By composition, we have a path between $\D\eta_A$ and $\eta_{\D A}$.
\end{proof}

\begin{corollary}
$\D$ defines a descent data operation.
\end{corollary}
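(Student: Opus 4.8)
The plan is to invoke the characterisation of descent data operations provided by \cref{main}: a lex operation is a descent data operation exactly when, for every type $A$, the map $\D\eta_A$ is an equivalence and $\D\eta_A$ and $\eta_{\D A}$ are path equal. Since the corollary asserts that $\D$ is a descent data operation in the cubical presheaf model over $\CC \times \BB$, I would work throughout in the internal language of that model, where path types are available and where the types are precisely the fibrant families. Consequently the two clauses of the second condition of \cref{main} need only be verified for fibrant $A$.

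The required pieces are already in place, so the step is essentially one of assembly. That $\D$ is a lex operation on this model is \cref{Dlex}. For a fibrant family $A$, \cref{key1} shows that $\eta_A$ is pointwise an equivalence and, via \cref{Dequiv}, that $\D\eta_A$ is an equivalence in the model over $\CC \times \BB$; this supplies the first clause. \cref{key2} constructs a path between $\eta_{\D A}$ and $\D\eta_A$, and since path equality is symmetric this supplies the second clause. Putting these together verifies the second condition of \cref{main}, whence $\D$ is a descent data operation.

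The only point requiring care is bookkeeping across the two presheaf models, rather than any genuine difficulty. \cref{key1} argues in the topos over $\BB$ to establish that $\eta_A$ is a \emph{pointwise} equivalence, and it is only after applying \cref{Dequiv} — which concerns the actual notion of equivalence in the model over $\CC \times \BB$ — that one obtains the statement we really need, namely that $\D\eta_A$ is an equivalence there. The homotopy of \cref{key2} is likewise built in the model over $\CC \times \BB$, using the lattice structure on $\II$. I do not expect any further obstacle: the substance of the result resides entirely in \cref{key1,key2}, and this final corollary is their assembly through \cref{main}.
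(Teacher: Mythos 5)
Your proposal is correct and follows exactly the paper's intended argument: the paper's own proof is simply ``By \cref{key1,key2}'', relying implicitly on \cref{Dlex} and the characterisation in \cref{main}, which is precisely the assembly you describe. Your remarks on the bookkeeping between the models over $\BB$ and $\CC\times\BB$ are accurate and do not change the substance.
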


\begin{proof}
  By \cref{key1,key2}.
\end{proof}

Note that a direct consequence of \cref{Dequiv} is the following
strictification result.

\begin{theorem}\label{strict1}
  Let $A$ and $B$ be fibrant families of types over $\Gamma$ that are $\D$\hyp{}modal.
  Then any pointwise equivalence $\sigma : A \to B$ is an equivalence.
\qed
\end{theorem}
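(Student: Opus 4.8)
The plan is to combine the pointwise\hyp{}to\hyp{}$\D$ transfer of equivalences from \cref{Dequiv} with the two modality hypotheses and the naturality of the unit $\eta$; the result is a formal consequence requiring no new construction.

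First I would apply \cref{Dequiv} directly. Since $\sigma : A \to B$ is a map between fibrant families over $\Gamma$ that is pointwise an equivalence, \cref{Dequiv} gives that $\D\sigma : \D A \to \D B$ is an equivalence. Next I would invoke naturality of the unit: by \cref{lex-operation-pointed} the pointing $\eta$ is a (strictly) natural transformation, so the square
\begin{equation*}
\D\sigma \circ \eta_A = \eta_B \circ \sigma
\end{equation*}
commutes. Because $A$ and $B$ are $\D$\hyp{}modal, the unit maps $\eta_A$ and $\eta_B$ are equivalences by definition.

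Finally I would conclude by two\hyp{}out\hyp{}of\hyp{}three for equivalences. The composite $\D\sigma \circ \eta_A$ is a composition of two equivalences, hence an equivalence, and therefore so is the equal map $\eta_B \circ \sigma$. Since $\eta_B$ is itself an equivalence, two\hyp{}out\hyp{}of\hyp{}three forces $\sigma$ to be an equivalence, as required. There is essentially no obstacle here: the only points deserving minor care are that the naturality square holds \emph{strictly} (so the two composites literally coincide rather than merely agreeing up to a chosen homotopy), and that the two\hyp{}out\hyp{}of\hyp{}three closure is available for the homotopical notion of equivalence in this model — both of which are standard in the present setting.
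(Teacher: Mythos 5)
Your argument is correct and is exactly the intended unpacking of the paper's one\hyp{}line justification: the paper marks this as a direct consequence of \cref{Dequiv}, with the implicit steps being precisely the naturality square for $\eta$, the fact that modality of $A$ and $B$ makes $\eta_A$ and $\eta_B$ equivalences, and two\hyp{}out\hyp{}of\hyp{}three. Nothing to add.
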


Let us note the following consequence of \cref{key1}.

\begin{corollary} \label{all-modal-via-pointwise-contractible}
The following conditions are equivalent:
\begin{conditions}
\item \label{all-modal-via-pointwise-contractible:modal}
all fibrant families of types are $D$-modal,
\item \label{all-modal-via-pointwise-contractible:euqivalence}
all pointwise equivalences between fibrant families of types are equivalences,
\item \label{all-modal-via-pointwise-contractible:contractible}
all fibrant families of types that are pointwise contractible are contractible.
\end{conditions}
\end{corollary}

\begin{proof}
The direction from \cref{all-modal-via-pointwise-contractible:modal} to \cref{all-modal-via-pointwise-contractible:euqivalence} is \cref{strict1}.
In the reverse direction, given a fibrant family of types $A$, recall that $\eta_A$ is a pointwise equivalence by \cref{key1}.
Then $\eta_A$ is an equivalence and hence $D$-modal.
\Cref{all-modal-via-pointwise-contractible:contractible} is a special case of \cref{all-modal-via-pointwise-contractible:euqivalence}.
The reverse direction holds since a (pointwise) equivalence can be described as a map with (pointwise) contractible fibers.
\end{proof}

The way from which we get $\D$ from $\E$ can also be applied to the
lex operation $\E A = A^R$, where $R$ is an arbitrary type.
This amounts to give a map which is {\em coherently constant} as defined by Kraus~\cite{Kraus15}
and so a map $\norm{R}\rightarrow A$ from the propositional truncation
of $R$ to $A$~\cite{Kraus15}.

Our development actually provides a way to recover
this result in the cubical setting.
Indeed, an element of $\D A$ is a sequence of elements $u(\vec{i})(\vec{x})$
in $A$ for $\vec{i}$ in $\PP_n$ and $\vec{x}$ in $R^{n+1}$ with
$u(\vec{i})(\vec{x}) = u(s_k\vec{i})(s_k\vec{x})$ on $i_k = 0$. Given an element
$x$ in $R$, we can build a left inverse $p_A$ of $\eta_A:A\rightarrow \D A$ by taking
$p_A u = u(1)(x)$. Hence we get an element of $R\rightarrow \isEquiv(\eta_A)$, and so of
$\norm{R}\rightarrow\isEquiv(\eta_A)$
which provides a factorization of a coherently constant map $R\rightarrow A$ through
$R\rightarrow\norm{R}$.

\subsection{Case of a monoid} \label{subsec:monoid}

 We consider the special case where the base category is a monoid $M$.
 If $\vec{x}$ is a sequence $(x_0,\dots,x_n)$,
 we write $t_k\vec{x}$ for the sequence where we omit $x_k$ and replace
 $x_{k+1}$ by $x_kx_{k+1}$ for $k<n$
 and $t_n\vec{x}$ is the sequence where we omit $x_n$. A type in the presheaf
 model is a type $A$ with an $M$\hyp{}action, and an element of $\D A$ is then
 a family of elements $u(\vec{i})(\vec{x})$ in $A$ with $\vec{i}$ in $\PP_n$
 and $\vec{x}$ in $M^{n+1}$ satisfying the compatibility conditions
 \begin{enumerate}
   \item $u(\vec{i})(\vec{x}) = u(s_k\vec{i})(t_k\vec{x})$ on $i_k = 0$ for $k<n$
     and
   \item $u(\vec{i})(\vec{x}) = u(s_n\vec{i})(t_n\vec{x})x_n$ on $i_n = 0$
     \end{enumerate}
 We define the $M$\hyp{}action on $\D A$ by
 $ux(\vec{i})(x_0,\dots,x_n) = u(\vec{i})(xx_0,\dots,x_n)$.

As a special case, let $M$ be the walking idempotent.
Let $e^2 = e$ be the non-trivial idempotent element of $M$.
Here is an example of a non\hyp{}modal type which is pointwise contractible, but not contractible.
Let $\Gamma$ be the set with elements $\rho_1,\rho_2$ and $\rho$ with
$\rho_1e = \rho_2 e = \rho$. We let $A$ be the following type.
We let $A\rho_1$ be the point $a_1$ and $A\rho_2$ be the point $a_2$
and $A\rho$ be the groupoid with two isomorphic objects $u_1,u_2$
with $a_ie = u_i$ for $i=1,2$. The type $A$ is then pointwise contractible
but it has no global point.%
\footnote{If $a$ is such a point, we should
have $a\rho_i = a_i$ and then $(a\rho_i) e = u_i$ and
$a(\rho_1e) = a(\rho_2e) = a\rho$ which is not possible since $u_1,u_2$
are distinct.}

\subsection{Generalization to a Grothendieck topology} \label{subsec:gen-grothendieck-top}

A Grothendieck topology $\JJ$ on the category $\CC$ defines
a set $\Cov(X,I) = \JJ(X)$ and we have a family
$\E_S$ indexed by $S:\Cov$ defined as follows. Let $\rho$ be in $\Gamma(X)$,
and $S$ is in $\Gamma\rightarrow\Cov$, so that
$S\rho$ is in $\Cov(X) = \JJ(X)$, which is a set of sieves on $X$.

 An element
of $(\E_SA)\rho$ is now a family $u(f)$ in $A\rho f$
{\em with $f$ in $S\rho$}.
We define in this way a family of lex operations $\E_S$
and an associated family of descent data operations $\D_S$
indexed by $S:\Cov$.

Note that if $S_1\rho$ is a subset of $S_2\rho$ for all $\rho$,
then we have a canonical projection map $\D_{S_2}A\rightarrow \D_{S_1}A$ that coheres with the pointings.
If $A$ is $\D_{S_1}$\hyp{}modal a left inverse of $\eta^{S_1}_A$ composed with this projection
map is a left inverse of $\eta^{S_2}_A$. Hence a  $\D_{S_1}$\hyp{}modal type
is also  $\D_{S_2}$\hyp{}modal and we have $\D_{S_1}\leqslant \D_{S_2}$ for the preorder defined
in \cref{Suniv}.
Since $\JJ$ is a Grothendieck topology, the family $\D_S$ over $S : \Cov$ is filtered.
Thus, we can apply \cref{filter} to obtain a model of univalent type theory with higher inductive types.
This can be seen as constructively modelling higher sheaves over $\JJ$ in the cubical model over $\BB$.

The next \lcnamecref{Gcontr} will be used for building such a sheaf model where
countable choice does not hold. The proof is similar to the one of \cref{Dfill}.

\begin{proposition}\label{Gcontr}
  If $A$ in $\Type(\Gamma)$ and $S$ in $\Gamma\rightarrow\Cov$ and $A$ is $D_S$-modal
  and $\rho$ in $\Gamma(X)$
  and $A\rho f$ is (pointwise) contractible for each $f$ in $S\rho$
  then we can find a uniform extension operation $e_{A\rho}(f,\psi,u)$ in $A\rho f$
  for all $f:Y\rightarrow X$ and $u$ partial element in $A\rho f$ of extent $\psi$.
\qed
\end{proposition}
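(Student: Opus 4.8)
The plan is to reduce the claim to the contractibility of $\D_S(A\rho)$ and then transport this along the modality equivalence. Unfolding the conclusion, we must produce, for every $f : Y \to X$ and every partial element $u$ of extent $\psi$ in $A\rho f$, an element $e_{A\rho}(f,\psi,u) \in A\rho f$ that agrees with $u$ on $\psi$ and is natural in $f$; this is exactly the assertion that the $\BB$-space $A\rho$, equipped with its restriction structure over the slice at $X$, is contractible \emph{uniformly} in $\CC$. So it suffices to exhibit such a uniform extension operation for $A\rho$.

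First I would show that $\D_S(A\rho)$ is contractible, using only that $A\rho f$ is contractible for each $f \in S\rho$. This is the $\D_S$-analog of \cref{Dcontr}, proved exactly as \cref{Dfill}. Recall that an element of $\E_S(A\rho)$ is a family $w(f) \in A\rho f$ indexed by $f \in S\rho$; applying the given pointwise extension operations componentwise, $e_{\E_S(A\rho)}(\psi, w)(f) = e_{A\rho f}(\psi, w(f))$, produces, exactly as in \cref{Econtr}, a \emph{uniform} extension operation on $\E_S(A\rho)$. The only thing to check is naturality, and it holds because reindexing $f \mapsto fg$ along $g : Z \to Y$ stays inside the sieve (sieves are closed under precomposition) and matches the pointwise data. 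Iterating \cref{Econtr}, each $\E_S^{n+1}(A\rho)$ is contractible, and the induction of \cref{Dfill} over $\PP_n$ — building $v(\vec{i})$ by induction on $n$ and extending with the operation of $\E_S^{n+1}(A\rho)$ against the boundary forced by $u$ (on $\psi$) and by the compatibility conditions (on $i_k = 0$) — yields a uniform extension operation for $\D_S(A\rho)$.

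Next I would use that $A$ is $\D_S$-modal, so that $\eta^S_{A\rho} : A\rho \to \D_S(A\rho)$ is an equivalence, and transport contractibility across it. Concretely, given $u$ partial of extent $\psi$ in $A\rho f$, one pushes it forward along $\eta^S$, extends inside the contractible type $\D_S(A\rho)$, pulls back along a left inverse $p$, and corrects the $\psi$-boundary using the homotopy $p \circ \eta^S \sim \id$ together with the filling operation of $A$. Since $\eta^S_A$ and the extension operation of $\D_S(A\rho)$ are both uniform over $\CC$, the resulting operation for $A\rho$ is again natural in $f$, which is the desired $e_{A\rho}(f,\psi,u)$.

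The hard part is the uniformity, and it has two sources. The hypothesised extension operations on the cover need not commute with $\CC$-restriction, yet the answer must be natural in $f$; this is repaired by $\E_S$ (as in \cref{Econtr}) and layered up coherently by the $\PP_n$-induction (as in \cref{Dfill}). Moreover, contractibility is assumed only over the sieve $S\rho$, whereas the conclusion ranges over all $f : Y \to X$; it is precisely the $\D_S$-modality that bridges this gap, gluing the local triviality on the cover into global triviality. This last step is the only one genuinely invoking the sheaf condition.
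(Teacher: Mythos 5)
Your argument is correct and is essentially the proof the paper intends: the paper only remarks that the proof is ``similar to the one of \cref{Dfill}'', and your write-up supplies exactly the two ingredients that remark presupposes — the $\E_S$/$\PP_n$-induction (as in \cref{Econtr,Dcontr,Dfill}, using that sieves are closed under precomposition so the componentwise extensions are uniform) to make $\D_S(A\rho)$ uniformly contractible, and then the $\D_S$-modality of $A$ to transport this back to $A\rho$ over all of $\CC/X$. Your closing observation that the modality hypothesis is precisely what upgrades contractibility on the sieve to contractibility on all $f : Y \to X$ correctly identifies the role of that assumption.
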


By uniform, we mean that we have
\begin{equation*}
e_{A\rho}(f,\psi,u)g = e_{A\rho}(fg,\psi,ug)
\end{equation*}
in $A\rho fg$ for any $g:Z\rightarrow Y$.

\subsection{A model with the negation of countable choice}


  Using in an essential way the notion of {\em homotopy} descent data, we build a model
  of univalent type theory with higher inductive types
  with a countable family of sets $\E_n$ such that each the homotopy propositional truncation $\norm{\E_n}$ is inhabited,
  but $\norm{\prod_{n:N} \E_n}$ is not globally inhabited.

We consider the following space, corresponding to the lattice generated by formal elements $X_n$
and $L_n$ with the relations $X_0 = 1$, $X_n = L_n\vee X_{n+1}$ and $L_{n+1} = L_n\wedge X_{n+1}$.
Using \cref{Gcontr} one can show the following result.

\begin{proposition}\label{counter}
  The type $\norm{L_0+X_n}$ is contractible for all $n$
  while $L_0$ is the homotopy propositional truncation of $\prod_{n:N} (L_0 + X_n)$.
\qed
\end{proposition}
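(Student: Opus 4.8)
The plan is to treat both parts as statements about modal (sheaf) propositions and to reduce each to a computation in the distributive lattice generated by the $X_n$ and $L_n$, globalising via \cref{Gcontr}. I read $L_0$ and each $X_n$ as cofibrations, hence as modal propositions, so that $\norm{L_0 + X_n}$ is the disjunction $L_0 \vee X_n$ and $\norm{\prod_n (L_0 + X_n)}$ is again a proposition. The leitmotif is that in this model pointwise contractibility does not imply contractibility, so every passage from local to global data has to be routed through modality and \cref{Gcontr}.

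For the first part, I would first record the elementary lattice facts that $L_{k+1} = L_k \wedge X_{k+1} \leqslant L_k$, whence $L_n \leqslant L_0$ for all $n$, and that $X_n = L_n \vee X_{n+1}$. An induction then yields $L_0 \vee X_n = 1$: the base case is $L_0 \vee X_0 = L_0 \vee 1 = 1$, and the step is $L_0 \vee X_{n+1} = L_0 \vee L_n \vee X_{n+1} = L_0 \vee X_n$, which is $1$ by hypothesis. Thus $\norm{L_0 + X_n}$ is the top cofibration, so it is pointwise contractible; being a propositional truncation it is $\D_S$-modal, and \cref{Gcontr} (equivalently the principle of \cref{all-modal-via-pointwise-contractible}) upgrades pointwise contractibility to genuine contractibility.

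For the second part, the inclusion $L_0 \to \norm{\prod_n (L_0 + X_n)}$ is immediate: a proof of $L_0$ gives the section that is the left injection at every $n$. For the converse it suffices, since $L_0$ is a proposition, to produce a map $\prod_n (L_0 + X_n) \to L_0$, and for this I would show that the support of $\prod_n (L_0 + X_n)$ lies under $L_0$, i.e.\ that any stage carrying a section of the countable product already lies under $L_0$. The analysis proceeds along the nested covers $X_n = L_n \vee X_{n+1}$ with $L_n \leqslant L_0$: on the $L_n$-part of each cover a section exhibits $L_0$ directly, and the only remaining region is the tail on which every component of the section is forced onto the $X_n$-side for all $n$ at once. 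The point is then to show, using \cref{Gcontr} together with the generating relations, that no section of the product can be assembled over this tail region outside $L_0$; since the summands of $L_0 + X_n$ are disjoint cofibrations, a section amounts to a decidable decomposition of the stage, and the relations obstruct choosing these decompositions uniformly in $n$ beyond $L_0$.

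I expect this hard direction to be the main obstacle, since it is exactly the locus of the failure of countable choice: by the first part each $\norm{L_0 + X_n}$ is contractible, so $\prod_n \norm{L_0 + X_n}$ is contractible, whereas $\norm{\prod_n (L_0 + X_n)}$ is merely $L_0$, which is not globally inhabited. Making this precise means ruling out a uniform global section of the product despite the pointwise inhabitation of every factor, and this is precisely what \cref{Gcontr} is designed to control; concretely I would identify the appropriate $\D_S$-modal family over the context encoding the tail $(X_n)_n$ and verify its pointwise contractibility on the generating covers, then read off the support computation from the resulting uniform extension operation.
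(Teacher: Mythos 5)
Your overall strategy---reading $L_0$ and the $X_n$ as subterminal modal propositions, reducing everything to computations in the presented lattice, and globalising via \cref{Gcontr}---is the intended one (the paper omits the proof and only points at \cref{Gcontr}). Your first part is correct: from $L_n \leqslant L_0$ you get $L_0 \vee X_{n+1} = L_0 \vee L_n \vee X_{n+1} = L_0 \vee X_n$, hence $L_0 \vee X_n = 1$ by induction from $X_0 = 1$; the truncation $\norm{L_0+X_n}$ is then a modal proposition inhabited on each leg of the cover $\{L_0, X_n\}$ of $1$, and \cref{Gcontr} produces the uniform global point. The inclusion $L_0 \to \norm{\prod_{n}(L_0+X_n)}$ is likewise fine.

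The gap is in the converse inclusion, which you rightly flag as the crux but then leave as a plan (``identify the appropriate $\D_S$\hyp{}modal family \ldots\ and read off the support computation'') without the idea that makes it work. The missing ingredient is a concrete analysis of the lattice. Since $X_k = L_k \vee X_{k+1}$ and $L_{k+1} \leqslant L_k$, one computes $L_m = L_0 \wedge X_m$, $L_m \vee X_n = X_{\min(m,n)}$ and $L_m \wedge X_n = L_{\max(m,n)}$, so every element is $0$, some $L_m$, or some $X_m$; in particular \emph{no two nonzero elements are disjoint}. A section of the coproduct $L_0 + X_n$ over a stage $U$ amounts (after gluing over a cover) to a decomposition $U = A \vee B$ with $A \wedge B = 0$, $A \leqslant L_0$, $B \leqslant X_n$, so connectedness forces $A = 0$ or $B = 0$, i.e.\ $U \leqslant L_0$ or $U \leqslant X_n$---not merely $U \leqslant L_0 \vee X_n$. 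A section of $\prod_n (L_0 + X_n)$ over some $U \not\leqslant L_0$ would therefore force $U \leqslant X_n$ for every $n$; but the only nonzero elements not below $L_0$ are the $X_k$, and $X_k \not\leqslant X_n$ for $n > k$. Hence every stage supporting a section lies below $L_0$, which together with $L_0 \neq 1$ (witnessed by the lattice homomorphism to $\{0,1\}$ sending each $X_n$ to $1$ and each $L_n$ to $0$) gives $\norm{\prod_n(L_0+X_n)} = L_0$. Note also that your diagnosis ``the relations obstruct choosing these decompositions uniformly in $n$'' is not quite the mechanism: already for a single $n \geqslant 1$ there is no global section of $L_0 + X_n$ over $1$; what quantifying over all $n$ adds is only that the stages $X_k$, over which finitely many factors do admit sections, are excluded as well.
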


\begin{corollary}
  There exists a model of univalent type theory with higher inductive types where countable choice does not
  hold.
\qed
\end{corollary}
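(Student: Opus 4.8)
The plan is to instantiate the sheaf-model construction of \cref{subsec:gen-grothendieck-top} at the specific site coming from the displayed lattice, and then read off the failure of countable choice from \cref{counter}. Concretely, I would keep the base cubical model over $\BB$ and take $\CC$ to be the poset underlying the lattice generated by the $X_n$ and $L_n$, equipped with the Grothendieck topology $\JJ$ generated by the covering relations $X_n = L_n \vee X_{n+1}$ (and $X_0 = 1$). As explained in \cref{subsec:gen-grothendieck-top}, this gives a filtered family of descent data operations $\D_S$ indexed by $S : \Cov$, so that \cref{filter} applies and yields a model of univalent type theory with higher inductive types. Everything below is then carried out in the internal language of this sheaf model, so that $+$ and $\norm{-}$ denote the modal (sheafified) coproduct and propositional truncation.

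Next I would fix the counterexample family $\E_n :\equiv L_0 + X_n$, a family of sets indexed by $n : \nats$, and recall the two facts I am allowed to assume from \cref{counter}: each $\norm{\E_n}$ is contractible, and $\norm{\prod_n \E_n}$ is the proposition $L_0$. Countable choice is the proposition
\begin{equation*}
\mathsf{AC}_\omega :\equiv \prod_{P : \nats \to \UU} \left( \left( \prod_{n : \nats} \norm{P\,n} \right) \to \norm{\textstyle\prod_{n : \nats} P\,n} \right),
\end{equation*}
and to show it is not globally inhabited it suffices to falsify the single implication obtained by instantiating $P$ at the family $n \mapsto \E_n$. Since each $\norm{\E_n}$ is contractible it is globally inhabited, so the hypothesis $\prod_n \norm{\E_n}$ is globally inhabited; hence this implication is globally true exactly when its conclusion $\norm{\prod_n \E_n} = L_0$ is globally inhabited.

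It therefore remains only to check that $L_0$ is not globally inhabited. A proposition of this shape is globally inhabited precisely when the corresponding element of the lattice is the top element $1$, so I must verify $L_0 \neq 1$. This is the one place where the shape of the lattice is used: the assignment $X_n \mapsto 1$, $L_n \mapsto 0$ respects all the defining relations and hence extends to a lattice homomorphism into $\{0,1\}$ sending $L_0$ to $0$, witnessing $L_0 \neq 1$ (geometrically, the ``point at infinity'' at which every $X_n$ holds but no $L_n$ does lies outside $L_0$). Consequently the displayed implication fails globally, $\mathsf{AC}_\omega$ is not globally inhabited, and countable choice does not hold in the model.

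The genuine mathematical work lies upstream, in \cref{filter} (construction of the model) and above all in \cref{counter} (the computation of the two truncations over this site), both of which are assumed here; granting these, the corollary is essentially an assembly. The only care needed in the corollary itself is to ensure that the coproduct and truncation of \cref{counter} are interpreted as the modal operations internal to the sheaf model, matching the $\norm{-}$ occurring in $\mathsf{AC}_\omega$, and to justify the soundness step passing from ``$\norm{\prod_n \E_n}$ is not globally inhabited'' to ``$\mathsf{AC}_\omega$ is not globally inhabited''. The only substantive (though short) argument left is the verification $L_0 \neq 1$, which I expect to be the main obstacle.
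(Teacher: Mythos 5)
Your proposal is correct and follows exactly the route the paper intends: the corollary is stated as an immediate consequence of \cref{counter}, and your argument is precisely the assembly the paper leaves implicit (instantiate countable choice at the family $n \mapsto L_0 + X_n$, note the hypothesis is globally inhabited by contractibility of each $\norm{L_0 + X_n}$, and conclude from the fact that the conclusion $L_0$ is not globally inhabited). Your explicit verification that $L_0 \neq 1$ via the lattice homomorphism $X_n \mapsto 1$, $L_n \mapsto 0$ (a point of the locale at which every $X_n$ holds but no $L_n$ does) is a detail the paper omits but is exactly the right justification.
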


As stressed in~\cite{SwanU19}, it is yet unknown how to build a model of univalent type
theory and higher inductive types satisfying countable choice in a constructive metatheory.
(Countable choice holds in a classical metatheory in the simplicial set model.)





\section{Variation with another notion of cofibration} \label{sec:variation}

We explain how to modify the definition of filling operation if we
work with the other notion of cofibration classified by $\cofunivLW$.
Recall that an element of $\cofunivLW(X)$
is no longer constant, but
is given by a family of elements  $\psi_f$ in $\cofuniv_{\BB}$ for $f:Y\rightarrow X$ and
satisfying $\psi_{f}\leqslant \psi_{f g}$ if $g:Z\rightarrow Y$.

All the main results above still hold for this new notion of cofibration,
suitably modified.
The notion of {\em filling operation}
for $A$ is given by an operation $c_A$ which takes as
argument $\gamma$ in $\Gamma(X)^{\II_{\BB}}$ and $\psi$ in $\cofuniv_\BB(X)$
and a family of elements $u(i)$ in $A\gamma(i)f$ on the extent
$\psi_f \vee i = 0$ such that $u_f(i)g = u_{fg}(i)$  for $g:Z\rightarrow Y$
on the extent
$\psi_f \vee i = 0$.
(There is a dual operation with $i=1$ instead.)
It produces an element $c_A(X,\gamma,\psi,u)(i)$ in $A\gamma(i)$
such that

\begin{enumerate}
  \item $c_A(X,\gamma,\psi,u)(i)f = u_f(i)$ on $\psi_f\vee i=0$,
  \item $c_A(X,\gamma,\psi,u)(i)f = c_A(Y,\gamma',\psi f,u')(i)$ with
    $\gamma'(i) = \gamma(i)f$ and $u'_g(i) = u_{fg}(i)$ on the extent
    $\psi_{fg}\vee i=0$ for $g:Z\rightarrow Y$.
\end{enumerate}

For instance, \cref{Efill} becomes the following result.

\begin{lemma}
  If $A$ has a pointwise filling operation $c_A(X)$ then $\E A$ has a filling operation.
\end{lemma}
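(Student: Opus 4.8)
The plan is to mirror the proof of \cref{Efill}, now working with the richer notion of filling operation adapted to the levelwise cofibration classifier $\cofunivLW$. The essential point is that the same definition of $\E(c_A)$ that worked for $\cofunivConst$ continues to make sense, but we must additionally verify the new uniformity condition relating restrictions of the partial input and the output along arrows $g\colon Z\rightarrow Y$. So first I would recall that an element of $(\E A)\rho$ for $\rho$ in $\Gamma(X)$ is a family $u(f)$ in $A\rho f$ for $f\colon Y\rightarrow X$, with restriction $uf(g) = u(fg)$. Given a pointwise filling operation $c_A(X)$, I would define, exactly as before,
\begin{equation*}
\E(c_A)(X,\gamma,\psi,u)(i)(f) = c_A(Y)(\gamma f^+,\psi f, u f^+)(i),
\end{equation*}
where now $\psi$ lies in $\cofunivLW(X)$, so $\psi f$ denotes its restriction along $f$, which is the appropriate argument for the pointwise filling operation $c_A(Y)$ indexed over $Y$.

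Next I would check the two axioms for a filling operation in the $\cofunivLW$ setting. Condition 1 (agreement with the partial input on the extent $\psi_f\vee i=0$) follows immediately from condition 1 of the pointwise operation $c_A(Y)$ applied at the component $f$, using that the partial element $u$ restricts correctly. For condition 2, the naturality/compatibility along $f\colon Y\rightarrow X$, I would compute as in \cref{Efill}: evaluating the restriction of the output at a further arrow $g\colon Z\rightarrow Y$ gives $c_A(Z)(\gamma(fg)^+,\psi(fg), u(fg)^+)(i)$, which matches the filling operation for $\E A$ computed over $Y$ at argument $g$. The bookkeeping is identical to the earlier proof except that the cofibration argument is now transported by restriction along the relevant arrow, which is exactly how $\cofunivLW$ is designed to behave.

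The step I expect to be the main obstacle—though it is really a verification rather than a genuine difficulty—is the new \emph{uniformity} condition on the partial input, namely that a valid partial element $u$ for $\E A$ must satisfy $u_f(i)g = u_{fg}(i)$ on the overlapping extent. I would need to confirm that when this condition holds for the input, the restricted families $u f^+$ fed into $c_A(Y)$ are themselves legitimate partial elements with extent $\psi f$, and that the defining formula for $\E(c_A)$ then produces an output again satisfying the corresponding uniformity constraint. This amounts to tracing how the inequality $\psi_f\leqslant\psi_{fg}$ guarantees that the extents nest correctly so that the pointwise operations at different levels agree on overlaps; once this nesting is in place, the commutation with substitution is formally the same as before. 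Hence the lemma follows by the same equivariance computation as \cref{Efill}, now carried out with the levelwise cofibrations.
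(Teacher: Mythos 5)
Your proposal is correct in substance and follows essentially the same route as the paper: define the $f$\hyp{}component of the filler by applying the pointwise operation $c_A(Y)$, and use the uniformity $u_f(i)g = u_{fg}(i)$ of the input together with $\psi_f \leqslant \psi_{fg}$ to check the two conditions, exactly as in the paper's adaptation of \cref{Efill}. One detail to tighten: the cofibration argument passed to $c_A(Y)$ must be the single component $\psi_f$ in $\cofuniv_\BB$ (the paper's $\psi'=\psi_f$), not the restricted family $\psi f$ in $\cofunivLW(Y)$, and likewise the partial element fed to $c_A(Y)$ is the $A$\hyp{}valued datum $u'(i) = u_f(i)(\id_Y)$ extracted from the $\E A$\hyp{}valued family $u_f(i)$.
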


\begin{proof}
  We take $\gamma$ in $\Gamma(X)^{\II_{\BB}}$ and $u_f(i)$ in $(\E A)\gamma(i)f$
  on the extent $\psi_f\vee i = 0$ and we define $v(i) = c_{\E A}(X,\gamma,\psi,u)(i)$
  in $(\E A)\gamma(i)$. For $f:Y\rightarrow X$, we take (filling at level $Y$)
  \begin{equation*}
  v(i)(f) = c_A(Y)(\gamma',\psi',u')
  \end{equation*}
  where $\gamma'(i) = \gamma(i)f$ and $\psi' = \psi_f$
  and $u'(i) = u_f(i)(\id_Y)$ in $A\gamma(i)f$ on the extent $\psi_f\vee i=0$.
\end{proof}

Let us give some examples.

\medskip

The first example is when $\CC$ is the poset $0 \leqslant 1$.
In this case, a global type $A$ is given by two spaces with a map $A(1)\rightarrow A(0)$.
An element of $\cofunivLW(0)$ is an element of $\cofuniv_{\BB}$ while an element of $\cofunivLW(1)$ is a pair $\psi_1,\psi_0$ of elements of $\cofuniv_{\BB}$ with $\psi_1\leqslant\psi_0$.
One can check that $A$ is fibrant exactly if $A(0)$ is fibrant and $A(1) \to A(0)$ is a fibration, and a similar characterization holds in the relative situation (for a type $A$ over $\Gamma$) and for trivial fibrations.
Using \cref{all-modal-via-pointwise-contractible:contractible} of \cref{all-modal-via-pointwise-contractible}, one sees that every type in the model is $\D$\hyp{}modal.
The model coincides with the Reedy presheaf model described in~\cite{Shulman15} over the direct category $\CC$ in the model of univalent type theory given by the base model.
More generally, this will be the case for an arbitrary direct category $\CC$ for which the inclusion of objects into morphisms given by identities is decidable.

\medskip

The second example is the walking retract $\CC$ generated by maps $f \colon 0 \to 1$ and $g \colon 1 \to 0$ satisfying $g f = \id_0$.
Note that $\CC$ is the idempotent splitting of the walking idempotent monoid $\M$ considered in \cref{subsec:monoid}.
This makes the cubical presheaf models (for both $\cofunivConst$ and $\cofunivLW$) over $\CC$ and $\M$ equivalent.
Level $0$ in the model over $\CC$ correspond to the fixpoints of the action of $e$ in the model over $\M$.
Taking $\cofunivLW$ as the cofibration classifier, the model of modal types gives a model for pointed families in a cubical model.
It is homotopically correct in the sense that the equivalences are levelwise.

\medskip

One might ask if types in the above model are already $\D$\hyp{}modal, similar to what happens for the poset $0 \leqslant 1$.
More generally, one might attempt to generalize from a direct category $\CC$ to a Reedy category $\CC$ that is elegant~\cite{BergnerR13}; the walking retract is an example of an elegant Reedy category, with coface map $f$ and codegeneracy map $g$.
Taking $\cofunivLW$ as the cofibration classifier, one might ask if the (trivial) fibrations are given by the (trivial) Reedy fibrations; as before, this would imply that every type in the model is $\D$\hyp{}modal.
An equivalent condition is that the levelwise cofibrations (classified by $\cofunivLW$) are also the Reedy cofibrations.
This holds true in classical situations where cofibrations and monomorphisms coincide and gives rise to the classical model~\cite{Shulman13} over an elegant Reedy category.

Unfortunately, this fails to hold in our constructive setting.
Ultimately, this is because the inclusions $A(X) \to A(Y)$ are not generally cofibrations for a global type $A$ and a codegeneracy map $Y \to X$ in $\CC$.
For the case of the walking retract, this is the inclusion $A(0) \to A(1)$.
In terms of a global type $A$ in the model over the walking monoid $\M$, it is the inclusion of fixpoints of the action of $e$ on $A$.
For a counterexample, let $S$ be a discrete space with non-decidable equality in one of the concrete cubical models listed in~\cref{subsec:examples}.
Take $A = S \times S$ with the action of $e$ given by swapping.

\section{Related and future work}

Shulman~\cite{Shulman19} shows that all $(\infty,1)$\hyp{}toposes have strict univalent universes, using a classical metatheory.
This work does not cover however (yet) higher inductive types and cumulativity of universes.
There are close connections between Shulman's work and ours, which we plan to explore in future work.
His work inspired some results about pointwise weak equivalences in~\cref{sec:homotopy-descent-data}, in particular~\cref{Dequiv}.

Once we have a presheaf model of univalence with homotopical features such as ours, it is now understood (see \eg~\cite{Sattler17,Boulier18}) how to define a Quillen model structure whose (trivial) fibrations coincide with the (contractible) types.
For the model of $\D$\hyp{}modal types, we expect that, similar to~\cite{Shulman19}, that the weak equivalences are the levelwise weak equivalences and the fibrations are a variation%
\footnote{We define a family of types to be injectively fibrant if it lifts against cofibrations that are levelwise trivial cofibrations.}
of the injective fibrations.
We leave this to future work.

Instead of parameterizing our construction over an external category $\CC$, we could start from a internal category $\CC$ in presheaves over $\BB$.
Note that the category of presheaves over an internal category in presheaves is still a presheaf category.
Compared to the construction of~\cite{Shulman19} (which instantiates at this level of generality), we seem to need less fibrancy assumptions on this internal category.
We leave this generalization to future work.



\section*{Acknowledgements}

Many thanks to Mathieu Anel, Steve Awodey, Mart\'{\i}n Escard\'{o}, Eric Finster,
Dan Licata, Emily Riehl, Mike Shulman, Bas
Spitters and Matthew Weaver for many discussions and remarks.

\appendix

\section{General results for lex modalities}\label{sec:general-results}

Some of our results hold for modalities in the sense of~\cite{RijkeSS17} that are not necessarily presented in a strict manner by a lex operation.
The main example is the case of accessible modalities, which are implemented using higher inductive types that rarely give rise to a lex operation.
The purpose of this \lcnamecref{sec:general-results} is to prove these more general statements.
We work in the homotopy type theory setting of~\cite{RijkeSS17}.
Universes are assumed univalent and closed under dependent sums, dependent products, identity types.
For statements involving accessible modalities, we also assume closure under higher inductive types.

In this \lcnamecref{sec:general-results}, we take terminology with potentially both strict and homotopical meaning to have the homotopical meaning by default.
This is opposed to the rest of the article, where we default to the strict meaning.
For example, equality refers to the identity type, and pullbacks refer to homotopy pullbacks (expressed using the identity type).

We write $\Modality(\UU)$ for the type of modalities on a universe $\UU$.
Recall from~\cite{RijkeSS17} that $M : \Modality(\UU)$ has an underlying subuniverse%
\footnote{%
By a subuniverse of $\UU$, we mean a subobject of $\UU$, \ie a predicate on $\UU$.
This is formally a map $\UU \to \Prop$ where $\Prop$ is the universe of (homotopy) propositions.
It is not to be confused with a subuniverse in the set\hyp{}theoretic sense in a model where universes are built out of sets.
We note that the size of the propositions in $\Prop$ here does not matter for us; one choice is propositions in $\UU$, but one could allow also a larger universe.
}
of $\UU$, the $M$\hyp{}modal types $\UU_M$.
Subuniverses of $\UU$ carry an evident poset structure.
Following~\cite[Subsection~3.2]{RijkeSS17}, we obtain a poset structure also on $\Modality(\UU)$.

\begin{definition} \label{modality-extension}
Let $\UU$ be a universe contained in a universe $\UU'$.
A modality $M'$ on $\UU'$ is an \emph{extension} of a modality $M$ on $\UU$ if every $M$\hyp{}modal type in $\UU$ is $M'$\hyp{}modal in $\UU'$ and for $X : \UU$, the canonical map $M'X \to MX$ is invertible.
\end{definition}

The above conditions mean that a $\UU$\hyp{}small type is $M$\hyp{}modal exactly if it is $M'$\hyp{}modal and $M$\hyp{}connected exactly if it is $M'$\hyp{}connected.
In terms of the stable factorization systems $(\mathcal{L}, \mathcal{R})$ and $(\mathcal{L}', \mathcal{R}')$ corresponding to $M$ and $M'$, this means that $\mathcal{L}$ and $\mathcal{R}$ are the restrictions of $\mathcal{L}'$ and $\mathcal{R}'$ to maps between $\UU$\hyp{}small types.
For this, recall~\cite[Subsection~1.2]{RijkeSS17} that the left and right classes of the stable factorization system corresponding to a modality are the connected and modal maps, which are defined by having connected and modal fibers, respectively.

We write $\Modality(\UU < \UU')$ for the type of pairs $(M, M')$ with $M$ a modality on $\UU$ and $M'$ an extension of $M$ to $\UU'$.
The poset structures on $\Modality(\UU)$ and $\Modality(\UU')$ extend to a poset structure on $\Modality(\UU < \UU')$.

The following statement makes precise that up to (essential) size issues, a modality is lex exactly if the universe of modal types is modal.
In particular, a ``size\hyp{}polymorphic'' modality (acting compatibly on all universes) whose action on maps preserves smallness of fibers is lex exactly if universes of modal types are modal.
This generalizes~\cref{univ} to modalities; the smallness condition on fibers mirrors the dependent action $\DD$ on $\UU$\hyp{}small types we require for a lex operation $\D$.
For $M : \Modality(\UU)$, we denote by $\UU_M$ the subuniverse of $\UU$ of $M$\hyp{}modal types.
\begin{proposition} \label{modality-lex-via-universe}
For $(M, M') : \Modality(\UU < \UU')$:
\begin{parts}
\item \label{modality-lex-via-universe:lex-to-universe}
if $M'$ is lex and preserves maps with $\UU$\hyp{}small fibers, then $\UU_M$ is $M'$\hyp{}modal;
\item \label{modality-lex-via-universe:universe-to-lex}
if $\UU_M$ is $M'$\hyp{}modal, then $M$ is lex and $M'$ preserves maps with $\UU$\hyp{}small fibers.
\end{parts}
\end{proposition}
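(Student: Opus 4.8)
The plan is to generalize the proof of \cref{univ} to the modality setting by classifying the $M'$\hyp{}reflection of the universal family of $M$\hyp{}modal types, the role played there by the dependent action $\DD$ now being taken by the extension property together with preservation of $\UU$\hyp{}small fibers. Throughout, the essential bookkeeping is that $\UU_M$ is not $\UU$\hyp{}small, so modality statements about it must be read with respect to $M'$, while the extension property lets me identify $M'X \simeq MX$ and hence $\eta'_X \simeq \eta_X$ for $X : \UU$.

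For \cref{modality-lex-via-universe:lex-to-universe}, write $p : E \to \UU_M$ for the universal family of $M$\hyp{}modal types, so that the fiber of $p$ over $X : \UU_M$ is the $\UU$\hyp{}small modal type underlying $X$. I would apply $M'$ to $p$. Since $p$ has $\UU$\hyp{}small fibers, the hypothesis that $M'$ preserves maps with $\UU$\hyp{}small fibers shows that $M'p$ again has $\UU$\hyp{}small fibers. Since $M'$ is lex, its modal types are closed under fibers of maps between modal types; as $M'p$ is a map between the $M'$\hyp{}modal types $M'E$ and $M'\UU_M$, each fiber of $M'p$ is $M'$\hyp{}modal. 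A $\UU$\hyp{}small $M'$\hyp{}modal type is $M$\hyp{}modal by the extension property, so $M'p$ is classified by a map $r : M'\UU_M \to \UU_M$. By lexness, the fiber of $M'p$ over $\eta_{\UU_M}(X)$ is $M'(\pi_1 X) \simeq \pi_1 X$, which is exactly the type classified by $X$ itself; univalence then gives $r \circ \eta_{\UU_M} \simeq \id_{\UU_M}$. Finally, a retraction of a unit into a modal type is automatically an equivalence: since $M'\UU_M$ is $M'$\hyp{}modal, precomposition with $\eta_{\UU_M}$ is an equivalence onto maps $\UU_M \to M'\UU_M$, and from $r \circ \eta_{\UU_M} \simeq \id$ one deduces $\eta_{\UU_M} \circ r \simeq \id$, so $\eta_{\UU_M}$ is an equivalence and $\UU_M$ is $M'$\hyp{}modal.

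For \cref{modality-lex-via-universe:universe-to-lex}, assume $\UU_M$ is $M'$\hyp{}modal. The universal property of $M'$\hyp{}localization then makes precomposition with $\eta'_A$ an equivalence $(\UU_M)^{M'A} \to (\UU_M)^A$; using $M'A \simeq MA$ and $\eta'_A \simeq \eta_A$ for $A : \UU$, this says that every family of $\UU$\hyp{}small $M$\hyp{}modal types over $A$ descends uniquely along $\eta_A$ to such a family over $MA$. This modal\hyp{}descent property along units is one of the equivalent formulations of lexness in~\cite{RijkeSS17}, giving that $M$ is lex. For preservation of $\UU$\hyp{}small fibers, let $f : A \to B$ have $\UU$\hyp{}small fibers, classified by $\chi : B \to \UU$. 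Reflecting fiberwise gives $\hat\chi : B \to \UU_M$, $\hat\chi(b) = M(\chi(b))$, which by descent extends to $\psi : M'B \to \UU_M$ with $\psi \circ \eta_B \simeq \hat\chi$; let $q : E_\psi \to M'B$ be the map it classifies. Then $E_\psi = \sum_{y : M'B} \psi(y)$ is $M'$\hyp{}modal, being a dependent sum of modal types over a modal base. The canonical map $A \to E_\psi$ over $\eta_B$ factors as the fiberwise unit $A \to \sum_B \hat\chi$ followed by the pullback of $\eta_B$ along $q$; the first is $M'$\hyp{}connected since $M$\hyp{}connected maps with $\UU$\hyp{}small fibers are $M'$\hyp{}connected by the extension property, and the second is $M'$\hyp{}connected since its fibers coincide with those of the $M'$\hyp{}connected map $\eta_B$. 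Hence $A \to E_\psi$ is an $M'$\hyp{}connected map into an $M'$\hyp{}modal type, so it exhibits $E_\psi \simeq M'A$; the defining square then identifies $M'f$ with $q$, whose fibers $\psi(y)$ are $\UU$\hyp{}small.

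The main obstacle is the converse direction \cref{modality-lex-via-universe:universe-to-lex}, and within it the passage from ``$\UU_M$ is modal'' to full lexness: modality of $\UU_M$ directly supplies only descent for families of \emph{modal} types, whereas lexness concerns reflections of arbitrary types and pullbacks, so the real content is the characterization of lex modalities by modal descent that I invoke from~\cite{RijkeSS17}. The secondary difficulty, pervading both directions, is the size discipline: one must consistently track that $\UU_M$ lives one universe up, apply $M'$ rather than $M$ to it, and use the extension property to shuttle between $M$ and $M'$ on $\UU$\hyp{}small inputs. The hypothesis that $M'$ preserves $\UU$\hyp{}small fibers is exactly what keeps the universal modal family within the range where classification by $\UU_M$ is available.
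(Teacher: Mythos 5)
Your proposal is correct and follows essentially the same route as the paper's proof: in part (i) you classify $M'$ of the universal modal family (using lexness for the pullback naturality square, preservation of $\UU$\hyp{}small fibers, and the extension property to land back in $\UU_M$) to obtain a retraction of the unit, and in part (ii) you derive lexness of $M$ from orthogonality of $\UU_M$ against units/connected types via the characterization in~\cite{RijkeSS17} and establish preservation of $\UU$\hyp{}small fibers by extending the reflected classifying family along $\eta'$ and comparing ($M'$\hyp{}connected, $M'$\hyp{}modal) factorizations. The only differences are cosmetic (you compose two connected maps where the paper first discards the connected part of $f$, and you invoke descent along units rather than condition~(xiii) of~\cite[Theorem~3.1]{RijkeSS17}, which are equivalent formulations).
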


\begin{proof}
For \cref{modality-lex-via-universe:lex-to-universe}, let $M'$ be lex and preserve maps with $\UU$\hyp{}small fibers.
To show that $\UU_M$ is $M'$\hyp{}modal, it suffices to construct a left inverse to $\eta^{M'}_{\UU_M}$ (\cite[Lemma~1.20]{RijkeSS17}).
By univalence of $\UU_M$, this means to find an extension
\[
\xymatrix{
  \textstyle\sum_{X:\UU_M} X
  \ar@{.>}[r]
  \ar[d]
  \pullback{dr}
&
  Z
  \ar@{.>}[d]^{\txt\scriptsize{$\UU$\hyp{}small and\\ $M$\hyp{}modal fibers}}
\\
  \UU_M
  \ar[r]^{\eta^{M'}_{\UU_M}}
&
  M' \UU_M
\rlap{.}}
\]
We use the naturality square of $\eta^{M'}$ at the left map.
The square is a pullback because $M'$ is lex.
The right map has $\UU$\hyp{}small fibers by assumption and has $M$\hyp{}modal fibers because it is $M'$\hyp{}modal as it goes between $M'$\hyp{}modal types.

For \cref{modality-lex-via-universe:universe-to-lex}, assume that $\UU_M$ is $M'$\hyp{}modal.
Then $\UU_M$ is right orthogonal against $M'$\hyp{}connected types, in particular $M$\hyp{}connected types.
This verifies condition~(xiii) of~\cite[Theorem~3.1]{RijkeSS17}, making $M$ lex.
It remains to show that $M'$ preserves maps with $\UU$\hyp{}small fibers.
Given such a map, we factor it using $M$ as an $M'$\hyp{}connected map followed by a map with fibers in $\UU_M$.
Since $M'$ sends $M'$\hyp{}connected maps to equivalences, it remains to show, given $Y : X \to \UU_M$, that $M'(\sum_X Y) \to M'X$ has $\UU$\hyp{}small fibers.
Since $\UU_M$ is $M'$\hyp{}modal, it is right orthogonal against $X \to M'X$.
Thus, $Y : X \to \UU_M$ extends uniquely to a map $Y' : M'X \to \UU_M$.
Looking at the classified maps, we obtain the following commuting diagram:
\[
\xymatrix{
  \textstyle\sum_X Y
  \ar[r]
  \ar[d]
  \pullback{dr}
&
  \textstyle\sum_{z:M'X} Y'(z)
  \ar[d]
\\
  X
  \ar[r]^{\eta^{M'}_X}
&
  M'X
\rlap{.}}
\]
Since the right map has $M$\hyp{}modal (hence also $M'$\hyp{}modal) fibers, it is $M'$\hyp{}modal.
The top map is a pullback of $\eta^{M'}_X$, hence $M'$\hyp{}connected.
Since
\[
\textstyle\sum_X Y \longrightarrow \textstyle\sum_{z:M'X} Y'(z) \longrightarrow M'X
\]
and
\[
\textstyle\sum_X Y \longrightarrow M'(\textstyle\sum_X Y) \longrightarrow M'X
\]
are ($M'$\hyp{}connected, $M'$\hyp{}modal)\hyp{}factorizations of the same map, they coincide.
This shows that the map $M'(\sum_X Y) \to M'X$ is equal to $\sum_{z:M'X} Y'(z) \to M'X$, hence has $\UU$\hyp{}small fibers.
\end{proof}

Recall from~\cite[Subsection~2.3]{RijkeSS17} that accessible modalities admit canonical extensions to larger universes.
If the accessible modality is lex, we observe that it satisfies the technical condition on smallness of fibers of \cref{modality-lex-via-universe}.
This means that \cref{modality-lex-via-universe:lex-to-universe} of that statement can also be regarded as a generalization of the direction from condition~(i) to condition~(iii) in~\cite[Theorem~3.11]{RijkeSS17}.

\begin{corollary}
Let $M$ be an accessible lex modality on a universe $\UU$.
Let $M'$ be its extension to a universe $\UU'$ containing $\UU$.
The $M'$ preserves maps with $\UU$\hyp{}small fibers.
\end{corollary}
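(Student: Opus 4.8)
The plan is to reduce the statement to the $M'$\hyp{}modality of the universe $\UU_M$ of $M$\hyp{}modal types and then invoke the already\hyp{}proved \cref{modality-lex-via-universe:universe-to-lex}. That part asserts that if $\UU_M$ is $M'$\hyp{}modal, then $M'$ preserves maps with $\UU$\hyp{}small fibers, which is exactly the desired conclusion. So the entire task reduces to showing that $\UU_M$ is $M'$\hyp{}modal, and this is where the hypotheses that $M$ is accessible and lex enter.

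To establish that $\UU_M$ is $M'$\hyp{}modal I would argue directly from the accessible presentation of $M$. Recall from~\cite[Subsection~2.3]{RijkeSS17} that the canonical extension $M'$ to $\UU'$ is the localization at the \emph{same} $\UU$\hyp{}small family of generators, now read in $\UU'$. Since $M$ is lex, these generators are $M$\hyp{}connected, and a family of $M$\hyp{}modal types indexed by an $M$\hyp{}connected type is (homotopy) constant; this is precisely what it means for $\UU_M$ to be local with respect to each generator, hence $M'$\hyp{}modal. This is the content of the implication $(\mathrm{i}) \Rightarrow (\mathrm{iii})$ of~\cite[Theorem~3.11]{RijkeSS17}, whose proof is carried out on the accessible presentation and so applies to the extended modality $M'$ on $\UU'$. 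Feeding the resulting $M'$\hyp{}modality of $\UU_M$ into \cref{modality-lex-via-universe:universe-to-lex} finishes the argument.

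The delicate point, and the one I would be most careful about, is to derive the $M'$\hyp{}modality of $\UU_M$ from accessibility rather than from \cref{modality-lex-via-universe:lex-to-universe}: the latter already presupposes the very smallness of fibers we are trying to establish, so invoking it here would be circular. Accessibility is exactly what breaks this circle, since it supplies an explicit reflector at the level of $\UU'$ together with the connected generators that exhibit $\UU_M$ as local. As a consistency check on the smallness claim, lex\hyp{}ness of $M'$ already lets one identify the fiber of $M'f$ over a point $\eta^{M'}_B(b)$ with $M'(\hFiber_f(b)) \simeq M(\hFiber_f(b))$, which lies in $\UU$ by the extension property in \cref{modality-extension}; the substantive step is to propagate this smallness from the points in the image of $\eta^{M'}_B$ to \emph{all} of $M'B$, and it is precisely the $M'$\hyp{}modality of $\UU_M$ that licenses this propagation, by extending the classifying map $B \to \UU_M$ of the small modal fibers along the $M'$\hyp{}connected unit $\eta^{M'}_B$.
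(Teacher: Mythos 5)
Your proposal is correct and follows essentially the same route as the paper: both reduce the claim to the $M'$\hyp{}modality of $\UU_M$, obtained from~\cite[Theorem~3.11]{RijkeSS17}, and then conclude via \cref{modality-lex-via-universe:universe-to-lex} of \cref{modality-lex-via-universe}. Your additional remarks on avoiding the circular use of \cref{modality-lex-via-universe:lex-to-universe} and on propagating smallness along the unit are sound but not needed beyond the citation.
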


\begin{proof}
This follows from \cref{modality-lex-via-universe:universe-to-lex} of \cref{modality-lex-via-universe} since $\UU_M$ is $M'$\hyp{}modal by~\cite[Theorem~3.11]{RijkeSS17}.
\end{proof}

Let $M : I \to \Modality(\UU)$ be a family of modalities.
We write
\begin{equation} \label{subuniverse-structural-meet}
\UU(M) = \textstyle\sum_{X : \UU} \textstyle\prod_{i : I} \,\text{$X$ is $M_i$\hyp{}modal}.
\end{equation}
for the meet of the subuniverses of modal types of $M_i$ over $i : I$.
We call a given meet $\bigwedge M$ of $M$ \emph{structural} if it is preserved under the forgetful functor to the poset of subuniverses.
This means that its subuniverse of modal types is $\UU(M)$.
By~\cite[Theorem 3.11, part (i)]{RijkeSS17}, $M$ has a structural meet exactly if $\UU_M$ admits a reflection in $\UU$.
In that case, $\bigwedge M$ is given by the reflection operation.

Given a family $(M, M') : I \to \Modality(\UU < \UU')$, we say that a given meet of $(M, M')$ is structural if it is sent to structural meets of $M$ and $M'$ by the forgetful functors.
Note that $(M, M')$ has a structural meet exactly if $M$ and $M'$ have structural meets $\bigwedge M$ and $\bigwedge M'$, respectively, and $\bigwedge M'$ is an extension of $\bigwedge M$ to $\UU'$.
This unfolds to the following conditions:
\begin{itemize}
\item
the subuniverse $\UU_M$ of $\UU$ admits a reflection $L$,
\item
the subuniverse $\UU'_{M'}$ of $\UU'$ admits a reflection $L'$,
\item
for $X : \UU$, the canonical map $L'X \to LX$ is invertible.
\end{itemize}

When considering diagrams in a poset, we will restrict our attention to shapes that are themselves posets.
Note that in any poset, the limit of a (poset\hyp{}indexed) diagram coincides with the meet over the object components of the diagram.
Nonetheless, it is useful to speak about limits of diagrams because this allows us to constrain the relations between the inputs objects.

A poset $I$ is \emph{filtered} if it is merely inhabited and for any two elements $x_0, x_1 : I$, there merely exists $y : I$ with $x_0, x_1 \leq y$.
It is \emph{cofiltered} if $I^\op$ is filtered.
The following statement generalizes \cref{filter} to modalities.

\begin{proposition} \label{cofiltered-limit-subuniverses}
Let $(M, M') : I \to \Modality(\UU < \UU')$ be a $\UU$\hyp{}small cofiltered diagram.
If $\UU_{M_i}$ is $M'_i$\hyp{}modal for all $i : I$, then $\UU(M) : \UU'$ belongs to $\UU'(M')$.
\end{proposition}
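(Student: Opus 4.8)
The plan is to fix an arbitrary $i : I$ and prove that $\UU(M)$ is $M'_i$\hyp{}modal; as $i$ ranges over $I$ this is exactly the assertion $\UU(M) \in \UU'(M')$. The argument transcribes the proof of \cref{filter}, with the dependent action of the lex operation there replaced by extension of families along modal units. By \cite[Lemma~1.20]{RijkeSS17} it suffices to construct a left inverse of $\eta^{M'_i}_{\UU(M)} : \UU(M) \to M'_i\UU(M)$. Let $\tau : \UU(M) \to \UU$ denote the tautological family $(X,p) \mapsto X$, whose fibers are $M_j$\hyp{}modal for every $j : I$ by the definition of $\UU(M)$. Using univalence as in the proof of \cref{modality-lex-via-universe:lex-to-universe}, such a left inverse amounts to extending $\tau$ along $\eta^{M'_i}_{\UU(M)}$ to a family over $M'_i\UU(M)$ whose fibers again lie in $\UU(M)$, that is, are $M_j$\hyp{}modal for all $j$.

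First I would produce an extension landing in $\UU_{M_i}$. Since every fiber of $\tau$ is in particular $M_i$\hyp{}modal, $\tau$ factors through a map $\tau_i : \UU(M) \to \UU_{M_i}$. By hypothesis $\UU_{M_i}$ is $M'_i$\hyp{}modal, hence right orthogonal to the unit $\eta^{M'_i}_{\UU(M)}$, so $\tau_i$ extends uniquely to $\tau'_i : M'_i\UU(M) \to \UU_{M_i}$. Postcomposing with $\UU_{M_i} \hookrightarrow \UU$ yields a $\UU$\hyp{}valued family over $M'_i\UU(M)$ that restricts to $\tau$ along the unit. It then remains only to show that each $\tau'_i(w)$ is $M_j$\hyp{}modal for every $j : I$, for then $\tau'_i$ corestricts to the required map $M'_i\UU(M) \to \UU(M)$.

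The hard part, and the only place cofilteredness is used, is this last upgrade from $M_i$\hyp{}modality to $M_j$\hyp{}modality of the fibers. Fix $w$ and $j$; since the goal is a proposition I may invoke the merely existing common lower bound $k : I$ with $k \le i$ and $k \le j$. From $k \le i$, monotonicity of the diagram into $\Modality(\UU < \UU')$ gives $M'_k \le M'_i$, so the hypothesis that $\UU_{M_k}$ is $M'_k$\hyp{}modal upgrades to $\UU_{M_k}$ being $M'_i$\hyp{}modal. As before $\tau$ factors through $\tau_k : \UU(M) \to \UU_{M_k}$, and orthogonality gives a unique extension $\tau'_k : M'_i\UU(M) \to \UU_{M_k}$. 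Now both $\tau'_i$ and the composite of $\tau'_k$ with the inclusion $\UU_{M_k} \hookrightarrow \UU_{M_i}$ (available because $M_k \le M_i$) are extensions of $\tau_i$ along the unit into the $M'_i$\hyp{}modal type $\UU_{M_i}$; by uniqueness of such extensions they coincide. Hence $\tau'_i(w)$ lies in $\UU_{M_k}$, so is $M_k$\hyp{}modal, and since $k \le j$ gives $M_k \le M_j$, it is $M_j$\hyp{}modal.

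Putting this together, $\tau'_i$ corestricts to a map $M'_i\UU(M) \to \UU(M)$ extending $\tau$ along $\eta^{M'_i}_{\UU(M)}$, which is the desired left inverse, so $\UU(M)$ is $M'_i$\hyp{}modal by \cite[Lemma~1.20]{RijkeSS17}. I expect the step requiring the most care to be the uniqueness\hyp{}of\hyp{}extensions identification of $\tau'_i$ with the factorization through $\UU_{M_k}$, together with the bookkeeping that lets the common lower bound $k$ be used under mere existence, legitimate precisely because the property ``$\tau'_i(w)$ is $M_j$\hyp{}modal'' is a proposition.
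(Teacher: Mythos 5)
Your proof is correct, but it takes a genuinely different route from the one in the paper. You transcribe the strategy of \cref{filter} and of \cref{modality-lex-via-universe}: build an explicit left inverse of $\eta^{M'_i}_{\UU(M)}$ by extending the tautological family along the unit into $\UU_{M_i}$ (using the hypothesis and right orthogonality), and then upgrade the fibers of the extension from $M_i$\hyp{}modal to $M_j$\hyp{}modal for every $j$ by a pointwise use of cofilteredness together with uniqueness of extensions into $M'_i$\hyp{}modal types. All the steps check out: the factorizations $\tau_k$ exist because the modality predicates are propositions, the identification $\tau'_i = \iota_{k,i}\circ\tau'_k$ is legitimate since both extend $\tau_i$ along an $M'_i$\hyp{}connected map into an $M'_i$\hyp{}modal type, and the elimination of the mere existence of $k$ is justified because $M_j$\hyp{}modality of a type is a proposition. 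The paper argues differently: it first uses cofilteredness globally to replace $\UU(M)$ by $\UU\parens[\big]{(M_j)_{j\leq i}}$, and then shows that the embedding $\UU\parens[\big]{(M_j)_{j\leq i}} \to \UU_{M_i}$ is an $M'_i$\hyp{}modal map, because its fibers are products of the fibers of the $M'_i$\hyp{}modal embeddings $\UU_{M_j} \to \UU_{M_i}$ and modal types are closed under products (\cite[Lemma~1.26]{RijkeSS17}); the conclusion then follows from the fact that a type over a modal type is modal exactly if the projection is a modal map. The paper's argument is shorter and purely structural, avoiding any explicit retraction; yours is more constructive, makes the parallel with \cref{filter} transparent, and localizes exactly where each hypothesis (univalence of the subuniverse, modality of $\UU_{M_i}$, cofilteredness) is consumed. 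The one step you flag as delicate---the uniqueness\hyp{}of\hyp{}extensions identification---is indeed the crux of your version, and it is sound.
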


\begin{proof}
Given $i : I$, we have to show that $\UU(M)$ is $M'_i$\hyp{}modal.
Because $I$ is cofiltered, we have
\[
\UU(M) = \UU\parens[\big]{(M_j)_{j \leq i}}
,\]
so it suffices to show that $\UU\parens[\big]{(M_j)_{j \leq i}}$ is $M'_i$\hyp{}modal.
By assumption, $\UU_{M_j}$ is $M'_j$\hyp{}modal, hence $M'_i$\hyp{}modal for $j \leq i$.
We now use that a type $X$ over $\UU_{M_i}$ ($M'_i$\hyp{}modal) is $M'_i$\hyp{}modal exactly if the map $X \to \UU_{M_i}$ is $M'_i$\hyp{}modal.
Given that $\UU_{M_j} \to \UU_{M_i}$ is $M'_i$\hyp{}modal for $j \leq i$, it suffices to show that $\UU\parens[\big]{(M_j)_{j \leq i}} \to \UU_{M_i}$ is $M'_i$\hyp{}modal.
Observe that the fibers of the latter embedding are products of the fibers of the former embeddings.
So the claim holds since modal types are closed under product (\cite[Lemma~1.26]{RijkeSS17}).
\end{proof}

\begin{corollary} \label{cofiltered-limit-universes-of-modals}
Let $(M, M') : I \to \Modality(\UU < \UU')$ be a $\UU$\hyp{}small cofiltered diagram with a structural meet $(\bigwedge M, \bigwedge M')$.
If $\UU_{M_i}$ is $M'_i$\hyp{}modal for all $i : I$, then $\UU_{\bigwedge M}$ is $\bigwedge M'$\hyp{}modal.
\end{corollary}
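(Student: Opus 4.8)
The plan is to derive this corollary almost immediately from \cref{cofiltered-limit-subuniverses} by unfolding what it means for the meet $(\bigwedge M, \bigwedge M')$ to be structural. The corollary asks that the subuniverse $\UU_{\bigwedge M}$ of $\bigwedge M$\hyp{}modal types be $\bigwedge M'$\hyp{}modal, and the two notations $\UU_{\bigwedge M}$ and $\UU'_{\bigwedge M'}$ are precisely the handles that structurality lets us replace by the combinatorial meets $\UU(M)$ and $\UU'(M')$ defined in~\labelcref{subuniverse-structural-meet}.

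First I would record the two identifications supplied by structurality. Because $\bigwedge M$ is a structural meet of $M$, its subuniverse of modal types is $\UU(M)$; that is, $\UU_{\bigwedge M} = \UU(M)$. Because $\bigwedge M'$ is a structural meet of $M'$, we have $\UU'_{\bigwedge M'} = \UU'(M')$, so that a type in $\UU'$ is $\bigwedge M'$\hyp{}modal precisely when it belongs to $\UU'(M')$, that is, when it is $M'_i$\hyp{}modal for every $i : I$.

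Next I would apply \cref{cofiltered-limit-subuniverses}. Its hypothesis is exactly our assumption that $\UU_{M_i}$ is $M'_i$\hyp{}modal for all $i : I$ (the cofiltered and $\UU$\hyp{}small conditions on the diagram being inherited unchanged), and it yields that $\UU(M) : \UU'$ belongs to $\UU'(M')$. Chaining the two identifications then gives $\UU_{\bigwedge M} = \UU(M) \in \UU'(M') = \UU'_{\bigwedge M'}$, which is exactly the assertion that $\UU_{\bigwedge M}$ is $\bigwedge M'$\hyp{}modal.

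I do not expect a genuine obstacle here: all of the substantive homotopy\hyp{}theoretic content---the reduction along cofilteredness, the fiberwise modality argument, and the closure of modal types under products---has already been carried out in \cref{cofiltered-limit-subuniverses}. The only point requiring care is the bookkeeping of the structural\hyp{}meet definitions, in particular keeping track of which universe each subuniverse inhabits: both $\UU(M)$ and $\UU_{\bigwedge M}$ are $\UU'$\hyp{}small objects, and $\bigwedge M'$\hyp{}modality is a property of $\UU'$\hyp{}small types, so the conclusion is a well\hyp{}formed statement about a type of $\UU'$.
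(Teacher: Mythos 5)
Your proof is correct and matches the paper's own argument, which likewise derives the corollary directly from \cref{cofiltered-limit-subuniverses} together with the definition of structural meet. The identifications $\UU_{\bigwedge M} = \UU(M)$ and $\UU'_{\bigwedge M'} = \UU'(M')$ that you spell out are exactly the content the paper leaves implicit.
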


\begin{proof}
This is a direct consequence of \cref{cofiltered-limit-subuniverses} and the definition of structural meet.
\end{proof}

The following statement says that, up to the same size issues of \cref{modality-lex-via-universe}, lex modalities are closed under structural cofiltered limits of modalities.
In particular, structural cofiltered limits of ``size\hyp{}polymorphic'' modalities whose actions on maps preserve smallness of fibers preserve left exactness.

\begin{corollary} \label{cofiltered-limit-lex}
In the situation of \cref{cofiltered-limit-universes-of-modals}, if $M'_i$ is lex for $i : I$ and preserves maps with $\UU$\hyp{}small fibers, then $\bigwedge M$ is lex and $\bigwedge M'$ preserves maps with $\UU$\hyp{}small fibers.
\end{corollary}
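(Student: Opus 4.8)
The plan is to read this off as a direct composition of the two halves of the characterization in \cref{modality-lex-via-universe} with the transport result \cref{cofiltered-limit-universes-of-modals}. The entire argument amounts to translating the hypotheses into the statement ``the universe of modal types is modal'', invoking the cofiltered\hyp{}limit result in that form, and translating back.

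First I would exploit the additional hypothesis levelwise. For each $i : I$, the modality $M'_i$ is lex and preserves maps with $\UU$\hyp{}small fibers, so \cref{modality-lex-via-universe:lex-to-universe} of \cref{modality-lex-via-universe} applied to the pair $(M_i, M'_i)$ shows that $\UU_{M_i}$ is $M'_i$\hyp{}modal. (If one reads the ``situation of \cref{cofiltered-limit-universes-of-modals}'' as already including the modality of each $\UU_{M_i}$, this step is redundant; in any case it is subsumed by the present hypothesis.) This puts us exactly in position to apply \cref{cofiltered-limit-universes-of-modals}: the diagram $(M, M')$ is $\UU$\hyp{}small and cofiltered, carries a structural meet $(\bigwedge M, \bigwedge M')$, and each $\UU_{M_i}$ is $M'_i$\hyp{}modal, so we conclude that $\UU_{\bigwedge M}$ is $\bigwedge M'$\hyp{}modal.

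Finally I would feed this conclusion back through the converse direction. Since the meet is structural, the pair $(\bigwedge M, \bigwedge M')$ is a genuine element of $\Modality(\UU < \UU')$: by the unfolding of structural meets, $\bigwedge M$ and $\bigwedge M'$ are given by reflections $L$ into $\UU_M$ and $L'$ into $\UU'_{M'}$, and the canonical map $L'X \to LX$ is invertible for $X : \UU$, which are precisely the conditions of \cref{modality-extension}. Applying \cref{modality-lex-via-universe:universe-to-lex} of \cref{modality-lex-via-universe} to this pair, the modality of $\UU_{\bigwedge M}$ yields at once that $\bigwedge M$ is lex and that $\bigwedge M'$ preserves maps with $\UU$\hyp{}small fibers, which is the desired conclusion.

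The argument is essentially formal once the two ingredients are available, so I do not anticipate a genuine obstacle. The one point that requires care is the verification that $(\bigwedge M, \bigwedge M')$ really is an object of $\Modality(\UU < \UU')$, so that \cref{modality-lex-via-universe} may legitimately be applied to it; but this is guaranteed by the definition of a structural meet of $(M, M')$, which is exactly the requirement that $\bigwedge M'$ be an extension of $\bigwedge M$ in the sense of \cref{modality-extension}.
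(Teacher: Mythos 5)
Your proof is correct and is precisely the argument the paper intends: its own proof is the one\hyp{}line remark that the statement ``is the combination of \cref{modality-lex-via-universe} and \cref{cofiltered-limit-universes-of-modals}'', and your three steps (part (i) of \cref{modality-lex-via-universe} levelwise, then \cref{cofiltered-limit-universes-of-modals}, then part (ii) applied to the structural meet) are exactly how that combination unfolds. Your added care about $(\bigwedge M, \bigwedge M')$ being a legitimate object of $\Modality(\UU < \UU')$ is the right point to check and is indeed supplied by the definition of structural meet.
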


\begin{proof}
This is the combination of \cref{modality-lex-via-universe} and \cref{cofiltered-limit-universes-of-modals}.
\end{proof}

Finally, we specialize to the important case of accessible modalities.

\begin{corollary} \label{cofiltered-limit-lex-accessible}
Let $M : I \to \Modality(\UU)$ be a $\UU$\hyp{}small cofiltered diagram.
If $M_i$ is lex and accessible for all $i : I$, then the meet $\bigwedge M$ exists and also has these properties.
\end{corollary}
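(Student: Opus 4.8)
The plan is to obtain existence and accessibility of $\bigwedge M$ from the closure of accessible modalities under $\UU$\hyp{}small meets, and to obtain left exactness by feeding the situation into \cref{cofiltered-limit-lex}. First I would fix a universe $\UU'$ containing $\UU$ and recall from~\cite[Subsection~2.3]{RijkeSS17} that each accessible modality $M_i$ admits a canonical extension $M'_i$ to $\UU'$. Since $M'_i$ is an extension of $M_i$, this assembles into a $\UU$\hyp{}small diagram $(M, M') : I \to \Modality(\UU < \UU')$, which is again cofiltered because $I$ is. I would then record the per\hyp{}index facts needed as hypotheses: $\UU_{M_i}$ is $M'_i$\hyp{}modal by~\cite[Theorem~3.11]{RijkeSS17} (this is exactly what was used in the earlier corollary on accessible lex modalities); the canonical extension $M'_i$ of the accessible lex modality $M_i$ is again lex; and $M'_i$ preserves maps with $\UU$\hyp{}small fibers, which is the content of the corollary established above for accessible lex modalities and their extensions.

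Next I would argue that the meet exists and is structural on both levels at once. Since accessible modalities are closed under $\UU$\hyp{}small meets~\cite{RijkeSS17}, the meets $\bigwedge M$ on $\UU$ and $\bigwedge M'$ on $\UU'$ exist and are accessible; concretely each is the localization at the union of the generating families, so its modal types are precisely the common modal types, and $\bigwedge M'$ is again an extension of $\bigwedge M$. Hence $(\bigwedge M, \bigwedge M')$ is a structural meet of $(M, M')$ in $\Modality(\UU < \UU')$, and in particular $\bigwedge M$ already has the desired existence and accessibility.

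It then remains to prove left exactness. By the structural meet just produced together with the fact that $\UU_{M_i}$ is $M'_i$\hyp{}modal for all $i$, we are in the situation of \cref{cofiltered-limit-universes-of-modals}, and the additional hypotheses of \cref{cofiltered-limit-lex}, namely that each $M'_i$ is lex and preserves maps with $\UU$\hyp{}small fibers, hold by the facts recorded above. Applying \cref{cofiltered-limit-lex} yields that $\bigwedge M$ is lex (and that $\bigwedge M'$ preserves maps with $\UU$\hyp{}small fibers). Combined with the accessibility obtained in the previous paragraph, this shows that $\bigwedge M$ exists and is both accessible and lex, as claimed.

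I expect the main obstacle to be the interface with the localization theory of~\cite{RijkeSS17}: one must verify carefully that the canonical extension of an accessible lex modality is again lex and that the meet of the extensions $M'_i$ really is the extension of $\bigwedge M$, so that $(\bigwedge M, \bigwedge M')$ is genuinely a structural meet. Once this bookkeeping about extensions is in place, every remaining step is a direct application of the earlier lemmas. I would also note an alternative that sidesteps the need for the extensions $M'_i$ to be lex: after deducing from \cref{cofiltered-limit-universes-of-modals} that $\UU_{\bigwedge M}$ is $\bigwedge M'$\hyp{}modal, one may invoke \cref{modality-lex-via-universe:universe-to-lex} of \cref{modality-lex-via-universe} directly to conclude that $\bigwedge M$ is lex, relying only on that $\UU_{M_i}$ is $M'_i$\hyp{}modal rather than on left exactness of each $M'_i$.
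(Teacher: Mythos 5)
Your proposal is correct, and its skeleton is the same as the paper's: extend each $M_i$ to $M'_i$ on a larger universe $\UU'$ using the accessibility machinery of~\cite{RijkeSS17}, get existence, accessibility and structurality of the meet $(\bigwedge M, \bigwedge M')$ from the closure of accessible modalities under $\UU$\hyp{}small meets, record that $\UU_{M_i}$ is $M'_i$\hyp{}modal, and feed this into \cref{cofiltered-limit-universes-of-modals}. The one divergence is the final step. Your primary route invokes \cref{cofiltered-limit-lex}, whose stated hypotheses include that each extension $M'_i$ is lex; as you yourself flag, this is a fact about the localization theory of~\cite{RijkeSS17} that is established nowhere in the paper (the preceding corollary only yields that $M'_i$ preserves maps with $\UU$\hyp{}small fibers, via \cref{modality-lex-via-universe:universe-to-lex} of \cref{modality-lex-via-universe}, which does not give lexness of $M'_i$ itself). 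So your main route carries a genuine extra proof obligation. The paper avoids it: having concluded from \cref{cofiltered-limit-universes-of-modals} that $\UU_{\bigwedge M}$ is $\bigwedge M'$\hyp{}modal, it deduces lexness of the accessible modality $\bigwedge M$ directly from the corresponding criterion in~\cite{RijkeSS17} --- which is precisely the ``alternative'' you sketch in your last sentence, namely applying \cref{modality-lex-via-universe:universe-to-lex} of \cref{modality-lex-via-universe} to the pair $(\bigwedge M, \bigwedge M')$, needing only that $\UU_{\bigwedge M}$ is $\bigwedge M'$\hyp{}modal and no lexness of the $M'_i$. (Indeed, the proof of \cref{cofiltered-limit-lex} given in the paper never actually uses the lexness hypothesis on the $M'_i$ for this direction.) Promote your alternative to the main argument and the extra bookkeeping disappears; otherwise your argument matches the paper's.
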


\begin{proof}
Let $\UU'$ be a universe containing $\UU$.
Let $(M, M') : I \to \Modality(\UU < \UU')$ be the extension of $M$ given by~\cite[Theorem~3.36]{RijkeSS17}.
By~\cite[Theorem~3.29]{RijkeSS17}, the meet of $(M, M')$ exists, is structural, and $\bigwedge M$ is again accessible.
By~\cite[Theorem~3.11]{RijkeSS17}, $\UU_{M_i}$ is $M'_i$\hyp{}modal for $i : I$.
Applying \cref{cofiltered-limit-universes-of-modals}, $\UU_{\bigwedge M}$ is $\bigwedge M'$\hyp{}modal.
By~\cite[Theorem~3.29]{RijkeSS17}, this makes $\bigwedge M$ is lex.
\end{proof}

\end{document}